\newtheorem{proposition}{Proposition}
\newtheorem{definition}{Definition}
\pgfplotsset{
    compat = 1.9, 
    xlabel style = {font=\footnotesize, yshift = 1ex},
    ylabel style = {font=\footnotesize, yshift = -1ex},
    xticklabel style = {font=\scriptsize},
    yticklabel style = {font=\scriptsize},
    legend style = {font=\scriptsize},
    title style = {font=\normalsize, yshift = -1ex},
    minor tick num = 1,
    grid = both,
    major grid style = {lightgray},
    minor grid style = {lightgray!40},
    width=0.45\textwidth,
}
\newcommand{\myGlobalTransformation}[2]
{
    \pgftransformcm{1}{0}{0.8}{0.4}{\pgfpoint{#1cm}{#2cm}}
}
\tikzset{circle split part fill/.style  args={#1,#2}{%
    alias=tmp@name,%
    postaction={%
     insert path={\pgfextra{%
      \pgfpointdiff{\pgfpointanchor{\pgf@node@name}{center}}%
      {\pgfpointanchor{\pgf@node@name}{east}}%
      \pgfmathsetmacro\insiderad{\pgf@x}%
      \fill[#1] (\pgf@node@name.base) ([xshift=-\pgflinewidth]\pgf@node@name.east) arc (0:180:\insiderad-\pgflinewidth)--cycle;
      \fill[#2] (\pgf@node@name.base) ([xshift=\pgflinewidth]\pgf@node@name.west) arc (180:360:\insiderad-\pgflinewidth)--cycle;
}}}}}
\patchcmd{\emailauthor}{(#2)}{}{}{}
\patchcmd{\urlauthor}{(#2)}{}{}{}
\date{April 11, 2022}
\begin{document}

\begin{frontmatter}

\title{Estimating Discretization Error with Preset Orders of Accuracy and Fractional Refinement Ratios \tnoteref{funding}}
\author[label1]{Sharp Chim Yui Lo\fnref{fn1}}
\fntext[fn1]{Email Address: chim.yui.lo@alumni.ethz.ch}
\affiliation[label1]{
    organization={Institute for Atmospheric and Climate Science},
    addressline={ETH Zurich},
    postcode={8092 Zurich},
    country={Switzerland}
}
\tnotetext[funding]{This research did not receive any specific grant from funding agencies in the public, commercial, or not-for-profit sectors.}
   


\begin{abstract}
Verification of solutions is crucial for establishing the reliability of simulations. A central challenge is to find an accurate and reliable estimate of the discretization error. Current approaches to this estimation rely on the observed order of accuracy; however, studies have shown that it may alter irregularly or become undefined. Therefore, we propose a grid refinement method which adopts constant orders given by the user, called the Preset Orders Expansion Method (POEM). The user is guaranteed to obtain the optimal set of orders through iterations and hence an accurate estimate of the discretization error. This method evaluates the reliability of the estimation by assessing the convergence of the expansion terms, which is fundamental for all grid refinement methods. We demonstrate these capabilities using advection and diffusion problems along different refinement paths. POEM requires a lower computational cost when the refinement ratio is higher. However, the estimated error suffers from higher uncertainty due to the reduced number of shared grid points. We circumvent this by using fractional refinement ratios and the Method of Interpolating Differences between Approximate Solutions (MIDAS). As a result, we can obtain a global estimate of the discretization error of lower uncertainty at a reduced computational cost.
\end{abstract}

\begin{keyword}
verification \sep discretization error \sep grid refinement \sep order of accuracy \sep refinement ratio \sep asymptotic convergence
\end{keyword}

\end{frontmatter}



\section{Introduction} \label{sec:intro}
As numerical simulation becomes an essential part of decision-making in science and engineering, verification and validation \citep{Roache1998VerificationEngineering,Stern2001ComprehensiveProcedures,OberkampfRoy2010} are widely used to establish the reliability of the approximate solutions given by simulations. Verification intends to show that one is ``solving the equations right'', whereas validation intends to show that one is ``solving the right equations'' \citep{Roache1998article}. In particular, solution verification is the process of estimating the numerical errors in an approximate solution for which the exact solution is unknown. Among all sources of numerical errors, the discretization error (DE) is usually the largest and most difficult to estimate \citep{Roy2005,Eca2014AStudies}. The estimation of DE in solution verification is the focus of this paper.

The estimation of DE by grid refinement methods \citep{Roy2010} has received substantial interest in computational physics \citep{Roache1994Perspective:Studies,Eca2014AStudies,Phillips2016ADynamics,Rider2016RobustAnalysis}. The main advantage of these methods is that they can be applied to any system response quantity, on any discretization method, and in a post-processing manner \citep{Stern2001ComprehensiveProcedures,Roy2010}. In principle, these methods use approximate solutions on a sequence of grids to obtain an estimate of the exact solution, followed by comparing it with the approximate solutions to obtain an estimate of the DE. This estimation is reliable if the approximate solutions lie in the so-called \textit{asymptotic range} \citep{Salas2006,Xing2010,Roy2010,Phillips2016ADynamics,Eca2018}, which is defined as the sequence of grids over which the DE reduces in the formal (theoretical) order of accuracy \citep{Roy2010}.

Most of the current grid refinement methods assess the reliability of the estimated DE by comparing the formal order of accuracy with the observed order of accuracy \citep{Roy2010,Orozco2010VerificationSolutions}; however, the use of the observed order has two problems. First, a small variation of the observed order may cause both the magnitude and the uncertainty of the estimated DE to change irregularly \citep{Roy2003,Eca2009,Xing2010,Hodis2012GridAneurysms}. Second, in certain cases the observed order is not well defined \citep{Roy2003,Phillips2014RichardsonDynamics,Phillips2016ADynamics}. These problems are usually overcome by placing a limit on the observed order, but then the estimated DE may contain unpredictable errors \citep{Phillips2014RichardsonDynamics,Phillips2016ADynamics}.

On the other hand, grid refinement methods generally suffer from a high computational cost due to the large number of grid points required to achieve the asymptotic range \citep{Eca2018}. One possible way to lower the cost is using refinement ratios greater than 0.5 \citep{Roy2010}. This has been applied to estimate local errors in critical locations of the domain \citep{Roache1994Perspective:Studies,Trivedi2019}, but there has been less evidence for such applications throughout the domain to estimate global errors. A central challenge is that the number of shared grid points across refinement levels decreases as the refinement ratio increases, which in turn increases the statistical uncertainty of the global estimates.

In this paper, we propose an alternative grid refinement method, called the Preset Orders Expansion Method (POEM). This method avoids the above problems with the observed order by using constant orders given by the user. In addition, it can be used to assess the asymptotic convergence of approximate solutions. While the idea of using constant orders is not new \citep{Eca2002}, no study to date has applied it to estimate the magnitude of DE. We also study the use of fractional refinement ratios greater than 0.5 to reduce the computational demand of POEM. The reduction in the number of shared grid points due to fractional refinement is overcome by a technique called Method of Interpolating Differences between Approximate Solutions (MIDAS).

\section{Discretization Error} \label{sec:DiscErr}
Physical and engineering systems are often described by a mathematical model, for example, a set of partial differential equations with initial and boundary conditions. Such a model is discretized when being solved on digital computers. The numerical error associated with the discretization process is known as \textit{discretization error} (DE).

DE, denoted by $\varepsilon$, is defined as the difference between the exact solution to the discretized model $\phi$ and the exact solution to the mathematical model $\phi_{e}$ \citep{Roy2010}. This can be formulated as follows.
\begin{equation} \label{eq:DE}
    \varepsilon \equiv \phi - \phi_{e}.
\end{equation}
In solution verification, $\phi_{e}$ is not available, so the DE can only be estimated. An estimate of DE, denoted by $\Tilde{\varepsilon}$, is defined as the difference between $\phi$ and an estimate of $\phi_{e}$, denoted by $\Tilde{\phi}_{e}$. This can be formulated as follows.
\begin{equation} \label{eq:est_DE}
    \Tilde{\varepsilon} \equiv \phi - \Tilde{\phi}_{e}.
\end{equation}
The solution $\phi$ is termed \textit{approximate solution}: the exact solution to the discretized model can be understood as an approximation to the mathematical model.

One of the assumptions of grid refinement methods is that DE constitutes the majority of numerical errors in an approximate solution \citep{Roy2010}. This is valid for most practical applications, since other sources of numerical errors can be easily eliminated or quantified \citep{Roy2005,Eca2014AStudies}. The calculations in this study are based on this assumption.

\section{Preset Orders Expansion Method (POEM)} \label{sec:POEM}
In this section, we explain the principles of POEM and demonstrate its applications on a few typical problems. We first present the principal equation and the procedures for estimating DE in this method. Then, we address how to choose the preset orders and how to adapt the principal equation to different refinement paths. Lastly, we discuss the computational demand of this method.


\subsection{The Principal Equation}
When $\phi_e$ is smooth over the domain of the mathematical model, it can be related to $\phi$ by a series expansion in orders of a grid spacing parameter $h$ \cite{Roy2010}:
\begin{equation} \label{eq:model_infty}
    \phi = \phi_{e} + \sum_{m=1}^{\infty} C_{q_m} h^{q_m}.
\end{equation}
Here, the coefficients $\{C_{q}\}$ are functions of space and time but independent of $h$. The orders of the coefficients $\{q_m: q_1 < q_2 < \dots \}$ are integers determined by the numerical scheme being used. In particular, $q_1$ is known as \textit{formal order of accuracy} \citep{Roy2010}.

In POEM, this infinite series is modeled by the following principal equation.
\begin{equation} \label{eq:model_orig}
    \phi = \Tilde{\phi}_{e} + \sum_{m=1}^{k} C_{p_m} h^{p_m},
\end{equation}
where $k \ge 2$ and $\{p_m: p_1 < p_2 < \dots \}$ are constant values given by the user. Preset orders $\{p_m\}$ do not necessarily match the correct ones in $\{q_m\}$. In this equation the unknowns are $\Tilde{\phi}_e$ and $\{C_{p}\}$, whereas in many other grid refinement methods the unknowns include an exponent of $h$, called the \textit{observed order of accuracy} \cite{Roy2010,Orozco2010VerificationSolutions}. Because of this, the problems with the observed orders mentioned in Section \ref{sec:intro} are avoided.


\subsection{Estimation of Discretization Error} \label{subsec:est_DE}
Solving for $\Tilde{\phi}_e$ and $\{C_{p}\}$ uniquely requires $k + 1$ approximate solutions orthogonal to each other. Such approximate solutions can be obtained by performing the same simulation on a set of systematically refined grids \citep{Roy2010}. This set of approximate solutions, denoted by $\{\phi_l\}$, forms a system of equations.
\begin{equation} \label{eq:model_system}
    \phi_l = \Tilde{\phi}_{e} + \sum_{m=1}^{k} C_{p_m} (r^{l-1} h)^{p_m},
\end{equation}
where $l \in [1,k+1]$ denotes the $l$th refinement level, $r \in [0.5,1)$ is the refinement ratio between two adjacent grid levels. The solutions to this system are $\Tilde{\phi}_e$ and $\{C_{p}h^p\}$.

For example, when $k = 2$, the principal equation (Equation \ref{eq:model_orig}) reads
\begin{equation} \label{eq:model_k2}
    \phi = \Tilde{\phi}_{e} + C_{p_1} h^{p_1} + C_{p_2} h^{p_2}.
\end{equation}
The system of equations (Equation \ref{eq:model_system}) can be written as
\begin{align} \label{eq:sys3eq_t}
    \begin{bmatrix}
        1 & 1 & 1 \\
        1 & r^{p_1} & r^{p_2} \\
        1 & r^{2p_1} & r^{2p_2}
    \end{bmatrix}
    \begin{bmatrix}
        \Tilde{\phi}_{e} \\
        C_{p_1}h^{p_1} \\
        C_{p_2}h^{p_2}
    \end{bmatrix}
    &=
    \begin{bmatrix}
        \phi_1 \\
        \phi_2 \\
        \phi_3
    \end{bmatrix}.
\end{align}
The analytical solutions of the unknowns are as follows.
\begin{align}
    \Tilde{\phi}_{e} &= \phi_1 - C_{p_1} h^{p_1} - C_{p_2} h^{p_2}, \\
    C_{p_1} h^{p_1} &= \frac{r^{p_2} e_{21} - e_{32}}{r^{p_1} (1 - r^{p_1}) (1 - r)}, \label{eq:C_p1} \\
    C_{p_2} h^{p_2} &= \frac{e_{32} - r^{p_1} e_{21}}{r^{p_1} (1 - r^{p_2}) (1 - r)} \label{eq:C_p2},
\end{align}
where $e_{ij} \equiv \phi_i - \phi_j$ are the \textit{differences between approximate solutions}.

By comparing $\phi_l$ with $\Tilde{\phi}_e$ at the same location, we can obtain a local estimate of the DE, $\Tilde{\varepsilon}$ (see Equation \ref{eq:est_DE}). To further quantify $\Tilde{\varepsilon}$ over the entire simulation domain, we evaluate its $L_1$-, $L_2$-, and $L_{\infty}$-norms. These norms are defined respectively as
\begin{align}
    ||f||_1 &\equiv \frac{1}{N}\sum_{i=1}^{N}|f(x_i)|, \label{eq:L1_norm} \\
    ||f||_2 &\equiv \sqrt{\frac{1}{N}\sum_{i=1}^{N}|f(x_i)|^2}, \label{eq:L2_norm} \\
    ||f||_{\infty} &\equiv \max_{1 \leq i \leq N} |f(x_i)|, \label{eq:L8_norm}
\end{align}
where $f:\{x_1,\ \dots \ ,x_N\} \rightarrow \mathbb{R}$ is a discrete function on a domain of $N$ grid points.


\subsection{Assessment of Asymptotic Convergence of Numerical Solutions}
An advantage of POEM is that it can be used to examine the asymptotic convergence of approximate solutions. From the definition of an asymptotic range \cite{Roy2010}, one can deduce that the approximate solutions lie in the asymptotic range if the higher-order coefficient terms dominate the lower-order coefficient terms. We quantify this by measuring the $L_2$-norms (see Equation \ref{eq:L2_norm}) of the coefficient terms.

In the vicinity of the asymptotic range, it suffices to compare only the first two terms ($k = 1,2$), where their ratio is given by
\begin{equation} \label{eq:beta_tilde}
    \Tilde{\beta} \equiv \frac{||C_{p_2} h^{p_2}||_2}{||C_{p_1} h^{p_1}||_2}.
\end{equation}
If $\Tilde{\beta}$ is smaller than a certain threshold $\beta \in [0,1]$, approximate solutions are considered to be in the asymptotic range and vice versa. To regularize such a criterion in simulation problems, we use a fixed value of $\beta = 0.01$ throughout this study.


\subsection{Choice of Preset Orders} \label{subsec:choice_orders}
The estimated DE is accurate if the estimated exact solution $\Tilde{\phi}_e$ converges to the exact solution $\phi_e$ faster than the approximate solution $\phi$ as the grid is refined. This amounts to eliminating the first coefficient terms of $\phi$ in Equation \ref{eq:model_infty}. In POEM, this is achieved by prescribing their actual orders $q_1,\ \dots \ , q_k$ in the preset orders. In principle, actual orders are determined solely by the numerical scheme; therefore, they can be identified during code verification \citep{Knupp2002,Salari2000}, a process recommended to take place before solution verification \citep{Roy2005}. However, it is sometimes tricky to interpret the orders observed in this process due to their high sensitivity to errors \citep{Love2013}. In the following, we present a straightforward way to obtain actual orders using POEM.


\subsubsection{Guaranteed for the Actual Orders of Coefficient Terms} \label{subsubsec:guarantee}
By substituting Equation \ref{eq:model_infty} into the analytical solution of Equation \ref{eq:model_system}, such as Equation \ref{eq:C_p1} and \ref{eq:C_p2}, the following results can be obtained. If all actual orders are prescribed, that is, $p_m \equiv q_m \ \forall \ m \in [1,k]$, then
\begin{align} \label{eq:correctPreset}
    C_{p_m} h^{p_m} = C_{q_m} h^{q_m} + \mathcal{O}(h^{q_{k+1}}) \hspace{2em} \forall \ m \in [1,k].
\end{align}
In other words, every extracted coefficient term takes the desired coefficient term. If some of the actual orders are not prescribed, then
\begin{alignat}{2}
    C_{p_m} h^{p_m} &= \mathcal{O}(h^{q_m}) &&\hspace{2em} \forall \ p_m < \mu, \label{eq:wrongPreset_small} \\ 
    C_{p_m} h^{p_m} &= \mathcal{O}(h^{\mu}) &&\hspace{2em} \forall \ p_m > \mu, \label{eq:wrongPreset_large}
\end{alignat}
where $\mu \in \{q_1,\ \dots \ , q_k\}$ is the smallest missed order. We note that $q_m$ does not necessarily equal to $p_m$ since we have defined $\{p_m\}$ and $\{q_m\}$ to be ordered sets.

These results suggest that if the entire set of preset orders is correct, then every extracted coefficient term will converge at the rate of its preset order; on the contrary, if there is a missed actual order $\mu$, then some of the extracted terms will converge at a different rate than its preset order. Therefore, by replacing the wrong preset orders with $\mu$ in repeated applications of POEM, the user is guaranteed to find all the actual orders $q_1,\ \dots \ , q_k$. With these orders substituted into the preset orders, the estimated exact solution $\Tilde{\phi}_e$ will possess the highest possible order of accuracy.


\subsubsection{Realizing Correct Orders from Wrong Orders} \label{subsubsec:realize_correct}
Here we show the differences in the extracted coefficient terms when the correct or a wrong set of preset orders is used. We consider the following initial-boundary value problem: solving the linear advection equation of $\phi(x,t)$,
\begin{equation}
    \frac{\partial \phi}{\partial t} + a \frac{\partial \phi}{\partial x} = 0,
\end{equation}
in the periodic domain $x \in [0,1]$ at time $t=2$ with the initial condition $\phi(x,0) = 2 + \cos(2 \pi x)$ and the speed of advection $a = 0.5$. In this demonstration, two discretization schemes are used. The first is the Beam-Warming (BW) scheme, which is first-order-accurate in time ($q_1 = 1$) when only the time dimension is refined \citep{Love2013}. Another is the second-order Runge-Kutta, second-order upwind (RK2U2) scheme, which is second-order-accurate in time ($q_1 = 2$).

We adopt the principal equation in Equation \ref{eq:model_k2} with $h \equiv \Delta t$. To obtain a set of systematically-refined grids, we refine the coarsest grid of size $\Delta x = \Delta t = 0.01$ in the time dimension with $r = 0.5$. We solve the system of equations in Equation \ref{eq:sys3eq_t} numerically.

Let us first consider the case where the preset orders are correct, that is, $p_1 = q_1$ and $p_2 = q_2$. For the case of the BW scheme, we set $p_1 = 1$ and $p_2 = 2$. The $L_2$-norm of the extracted coefficient terms and their convergence rates are plotted in Figure \ref{fig:cNorm-1D-BW-t-correct}, where $\Delta t$ corresponds to the coarsest level in each set of three refinement levels (see Equation \ref{eq:model_system}). In the right figure, we observe that the coefficient terms converge at the rate of their respective preset order. We find similar results (not shown) in the case of the RK2U2 scheme, where $p_1 = 2$ and $p_2 = 3$ are set. These results are consistent with Equation \ref{eq:correctPreset}.

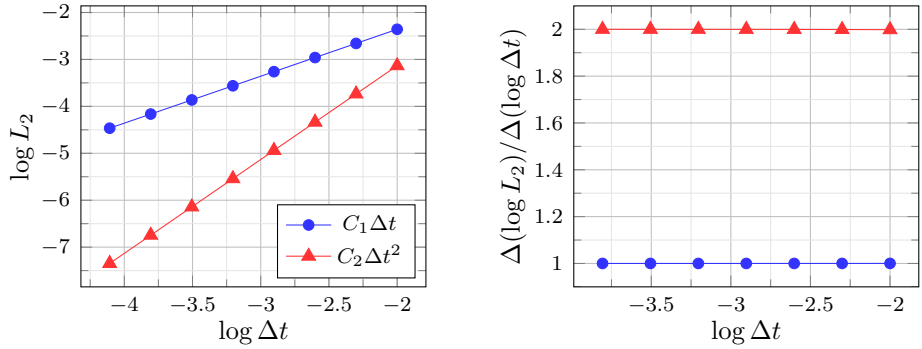
\begin{figure}[!htb]
\centering
\begin{tikzpicture}
\pgfplotstableread[skip first n=2]{figures/correct_orders/cNorm-1D-BW-t-correct.dat}{\table}
\begin{axis}[
    xtick distance = 0.5,
    ytick distance = 1,
    xlabel = $\log \Delta t$,
    ylabel = $\log L_2$,
    legend pos = south east
]
\addplot[color=blue!80, mark=*, mark size=2] table [x index=0, y index=1] {\table};
\addplot[color=red!80, mark=triangle*, mark size=3] table [x index=0, y index=2] {\table};
\legend{$C_{1} \Delta t$, $C_{2} \Delta t^{2}$}
\end{axis}
\end{tikzpicture}
\hskip 20pt
\begin{tikzpicture}
\pgfplotstableread{figures/correct_orders/cNorm_slope-1D-BW-t-correct.dat}{\table}
\begin{axis}[
    xtick distance = 0.5,
    ytick distance = 0.2,
    xlabel = $\log \Delta t$,
    ylabel = $\Delta (\log L_2) / \Delta (\log \Delta t)$
]
\addplot[color=blue!80, mark=*, mark size=2] table [x index=0, y index=1] {\table};
\addplot[color=red!80, mark=triangle*, mark size=3] table [x index=0, y index=2] {\table};
\end{axis}
\end{tikzpicture}
\caption{The $L_2$-norm of coefficient terms and their convergence rates obtained by using the correct orders, 1 and 2, in the principal equation when the $t$ dimension is refined. Approximate solutions are obtained by solving the one-dimensional advection equation using the BW scheme.}
\label{fig:cNorm-1D-BW-t-correct}
\end{figure}

Next, we consider the case in which a wrong set of preset orders is chosen. For the case of the BW scheme, we set $p_1 = 2$ and $p_2 = 3$. The results are shown in Figure \ref{fig:cNorm-1D-BW-t-wrong}. In this case, the coefficient terms do not converge at the rate of their respective preset order but a rate of 1. For the case of the RK2U2 scheme, we set $p_1 = 1$ and $p_2 = 2$. The convergence rates of $C_{1} \Delta t$ and $C_{2} \Delta t^2$ are found to be 3 and 2 respectively (not shown). These results agree with Equation \ref{eq:wrongPreset_small} and \ref{eq:wrongPreset_large}.

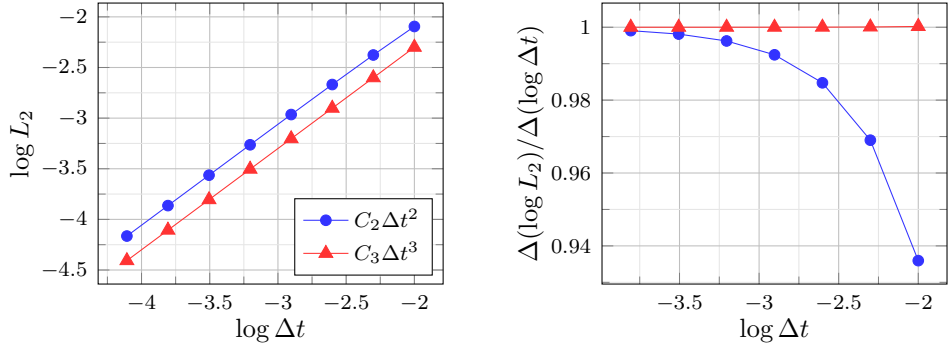
\begin{figure}[!htb]
\centering
\begin{tikzpicture}
\pgfplotstableread[skip first n=2]{figures/wrong_orders/cNorm-1D-BW-t-wrong.dat}{\table}
\begin{axis}[
    xtick distance = 0.5,
    ytick distance = 0.5,
    xlabel = $\log \Delta t$,
    ylabel = $\log L_2$,
    legend pos = south east
]
\addplot[color=blue!80, mark=*, mark size=2] table [x index=0, y index=1] {\table};
\addplot[color=red!80, mark=triangle*, mark size=3] table [x index=0, y index=2] {\table};
\legend{$C_{2} \Delta t^{2}$, $C_{3} \Delta t^{3}$}
\end{axis}
\end{tikzpicture}
\hskip 20pt
\begin{tikzpicture}
\pgfplotstableread{figures/wrong_orders/cNorm_slope-1D-BW-t-wrong.dat}{\table}
\begin{axis}[
    xtick distance = 0.5,
    ytick distance = 0.02,
    xlabel = $\log \Delta t$,
    ylabel = $\Delta (\log L_2) / \Delta (\log \Delta t)$
]
\addplot[color=blue!80, mark=*, mark size=2] table [x index=0, y index=1] {\table};
\addplot[color=red!80, mark=triangle*, mark size=3] table [x index=0, y index=2] {\table};
\end{axis}
\end{tikzpicture}
\caption{The $L_2$-norm of coefficient terms and their convergence rates obtained by using the wrong orders 2 and 3 in the principal equation when the $t$ dimension is refined. The approximate solutions are the same as those used in Figure \ref{fig:cNorm-1D-BW-t-correct}.}
\label{fig:cNorm-1D-BW-t-wrong}
\end{figure}


\FloatBarrier
\subsection{Adaptations to Different Refinement Paths} \label{subsec:adapt_paths}
When multiple dimensions are refined, the coefficient terms $\{C_p h^p\}$ in Equation \ref{eq:model_infty} can be expressed as a sum of multiple terms using the Taylor expansion. It is not obvious whether the form of Equation \ref{eq:model_orig} can be recovered and applied. This motivates us to do the following derivation.

To begin with, we write the explicit form of the coefficient terms in Equation \ref{eq:model_infty}. For the moment, we focus on refinement in the time dimension and a space dimension. Suppose the formal order of accuracy of the numerical scheme is $q_x$ when the $x$ dimension is refined and $q_t$ when the $t$ dimension is refined. Then, the equation can be written as follows.
\begin{equation} \label{eq:model_xt}
    \phi = \phi_{e} + \sum_{m=q_x}^{\infty} C_{m,0}\Delta x^{m} + \sum_{m=q_t}^{\infty} C_{0,m}\Delta t^{m} + \sum_{m=q_x+q_t}^{\infty} \sum_{n=q_t}^{m-q_x} C_{m-n,n}\Delta x^{m-n}\Delta t^{n}.
\end{equation}
As an example, the equation for the RK2U2 scheme ($q_x=q_t=2$) is
\begin{align} \label{eq:model_RK2U2_infty}
\begin{split}
    \phi &= \phi_{e} \\
    &+ C_{2,0}\Delta x^{2} + C_{3,0}\Delta x^{3} + \dots \\
    &+ C_{0,2}\Delta t^{2} + C_{0,3}\Delta t^{3} + \dots \\
    &+ C_{2,2}\Delta t^{2}\Delta x^{2} + C_{2,3}\Delta x^{2}\Delta t^{3} + \dots.
\end{split}
\end{align}

If we now truncate the series and try to form a system of equations like Equation \ref{eq:model_system}, the resulting system will be singular. The reason is that the refinement ratios for all dimensions are interdependent in a systematic grid refinement \citep{Roy2010}, which in turn lowers the degrees of freedom of that system. Such a dependency corresponds to the so-called \textit{refinement path}, which is of the form $r_t = (r_x)^s$ with $s \in \mathbb{R}$ in this case.

To obtain a non-singular system from Equation \ref{eq:model_xt}, we need to group the coefficient terms that depend on each other. After that, we obtain
\begin{align} \label{eq:model_path_infty}
    \phi = \phi_{e} + \sum_{m=1}^{\infty} D_{q_m} \Delta x^{q_m},
\end{align}
where $\{D_q\}$ are the effective coefficients given by
\begin{align}
    D_{q} &\equiv \sum_{n=0}^{\lfloor q/s \rfloor} C_{q-ns,n} \Big( \frac{\Delta t}{\Delta x^s} \Big)^{n}.
\end{align}
To ensure that $D_q$ does not contribute to the convergence rate of $D_q \Delta x^q$, we check whether $D_q$ is constant on all refinement levels. Since $\{C_q\}$ are independent of grid spacing, we only need to check the term $\Delta t / \Delta x^s$. Indeed, on the $l$th refinement level,
\begin{equation}
    \frac{(r_t)^l \Delta t}{[(r_x)^{l} \Delta x]^s} = \bigg[ \frac{r_t}{(r_x)^s} \bigg]^l \frac{\Delta t}{\Delta x^s} = \frac{\Delta t}{\Delta x^s}.
\end{equation}

We find that the expansion series in this case, Equation \ref{eq:model_path_infty}, is essentially the same as the previous one, Equation \ref{eq:model_infty}. The only difference is that while $q_m$ must previously be an integer, it can be a decimal in this case. In addition, the details of $D_q$ are less important as long as they do not affect the convergence rate of $D_q \Delta x^q$. These results can be generalized to refinement in multiple dimensions.

Therefore, the principles and methods presented so far are generally applicable. The general form of the principal equation is as follows.
\begin{equation} \label{eq:model_path}
    \phi = \Tilde{\phi}_{e} + \sum_{m=1}^{k} D_{p_m} \Delta x^{p_m}
\end{equation}
with the unknowns $\Tilde{\phi}_{e}$ and $\{D_p\}$. The unknowns can be obtained by solving the corresponding system of equations
\begin{align} \label{eq:model_path_system}
    \phi_l = \Tilde{\phi}_{e} + \sum_{m=1}^{k} D_{p_m} (r_x^{l-1} \Delta x)^{p_m}
\end{align}
formed by approximate solutions $\{\phi_l\}$ on systematically-refined grids. The knowledge in Section \ref{subsec:est_DE} -- \ref{subsubsec:realize_correct} applies accordingly.


\subsubsection{Refinement with Constant CFL Number} \label{subsubsec:const_cfl}
Here we show the applicability of POEM when the Courant-Friedrichs-
Lewy (CFL) number \citep{CFL1928} is kept constant ($r_t = r_x$) during refinement. We use the advection problem in Section \ref{subsubsec:realize_correct} as a test case, where the CFL number is given by $a \Delta t / \Delta x$. The partial differential equation is discretized with the RK2U2 scheme, and the refinement ratios $r_t = r_x = 0.5$ are used. Starting with Equation \ref{eq:model_RK2U2_infty}, we can show that the RK2U2 scheme is formally second-order-accurate in this case. Therefore, we use the principal equation in Equation \ref{eq:model_path_infty} with $k = 2, p_1 = 2, p_2 = 3$:
\begin{equation} \label{eq:model_path_k2}
    \phi = \Tilde{\phi}_{e} + D_{2}\Delta x^{2} + D_{3}\Delta x^{3}.
\end{equation}
By solving the corresponding system of equations (Equation \ref{eq:model_path_system}), we obtain $\{\Tilde{\phi}_{e}, D_{2}\Delta x^{2}, D_{3}\Delta x^{3}\}$. After that, we calculate the estimated DE, $\Tilde{\varepsilon}$, and the actual DE, $\varepsilon$, using Equations \ref{eq:est_DE} and \ref{eq:DE}, respectively.

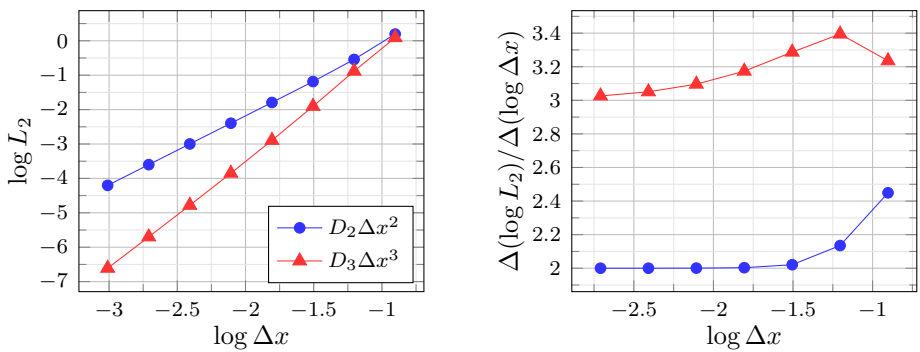
\begin{figure}[!htb]
\centering
\begin{tikzpicture}
\pgfplotstableread[skip first n=2]{figures/const_cfl/cNorm-1D-RK2U2-c-cfl.dat}{\table}
\begin{axis}[
    xtick distance = 0.5,
    ytick distance = 1,
    xlabel = $\log \Delta x$,
    ylabel = $\log L_2$,
    legend pos = south east
]
\addplot[color=blue!80, mark=*, mark size=2] table [x index=0, y index=1] {\table};
\addplot[color=red!80, mark=triangle*, mark size=3] table [x index=0, y index=2] {\table};
\legend{$D_{2} \Delta x^{2}$, $D_{3} \Delta x^{3}$}
\end{axis}
\end{tikzpicture}
\hskip 20pt
\begin{tikzpicture}
\pgfplotstableread{figures/const_cfl/cNorm_slope-1D-RK2U2-c-cfl.dat}{\table}
\begin{axis}[
    xtick distance = 0.5,
    ytick distance = 0.2,
    xlabel = $\log \Delta x$,
    ylabel = $\Delta (\log L_2) / \Delta (\log \Delta x)$
]
\addplot[color=blue!80, mark=*, mark size=2] table [x index=0, y index=1] {\table};
\addplot[color=red!80, mark=triangle*, mark size=3] table [x index=0, y index=2] {\table};
\end{axis}
\end{tikzpicture}
\caption{The $L_2$-norm of effective coefficient terms and their convergence rates obtained by using the correct orders, 2 and 3, in the principal equation when the dimensions $x$ and $t$ are refined along the path of a constant CFL number. Approximate solutions are obtained by solving the one-dimensional advection equation using the RK2U2 scheme.}
\label{fig:cNorm-1D-RK2U2-c-cfl}
\end{figure}

\begin{figure}[!htb]
\centering
\begin{tikzpicture}
\pgfplotstableread[skip first n=2]{figures/const_cfl/discErr-1D-RK2U2-c-cfl.dat}{\table}
\begin{axis}[
    xtick distance = 0.5,
    ytick distance = 1,
    xlabel = $\log \Delta x$,
    ylabel = $\log L$,
    legend pos = south east
]
\addplot[color=blue!80, mark=*, mark size=1] table [x index=0, y index=1] {\table};
\addplot[color=red!80, mark=triangle, mark size=4] table [x index=0, y index=2] {\table};
\addplot[color=teal, mark=square, mark size=4] table [x index=0, y index=2] {\table};
\legend{$L_1$, $L_2$, $L_{\infty}$}
\end{axis}
\end{tikzpicture}
\hskip 20pt
\begin{tikzpicture}
\pgfplotstableread{figures/const_cfl/discErr_slope-1D-RK2U2-c-cfl.dat}{\table}
\begin{axis}[
    minor tick num = 3,
    xtick distance = 0.5,
    ytick distance = 0.08,
    xlabel = $\log \Delta x$,
    ylabel = $\Delta (\log L) / \Delta (\log \Delta x)$
]
\addplot[color=blue!80, mark=*, mark size=1] table [x index=0, y index=1] {\table};
\addplot[color=red!80, mark=triangle, mark size=4] table [x index=0, y index=2] {\table};
\addplot[color=teal, mark=square, mark size=4] table [x index=0, y index=2] {\table};
\end{axis}
\end{tikzpicture}
\caption{Different norms of $\varepsilon$ and their convergence rates in addition to the results in Figure \ref{fig:cNorm-1D-RK2U2-c-cfl}.}
\label{fig:discErr-1D-RK2U2-c-cfl}
\end{figure}
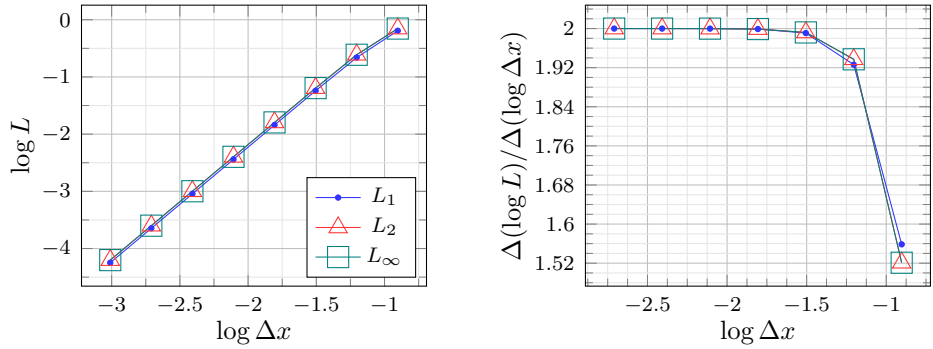

We first examine the results of the coefficient terms. The right figure in Figure \ref{fig:cNorm-1D-RK2U2-c-cfl} suggests that the preset orders 2 and 3 are correct, and the left figure suggests that the approximate solutions lie in the asymptotic range when $\log \Delta x < -2.7$, following the criterion in Equation \ref{eq:beta_tilde}. These implications can be confirmed by the graph of $\varepsilon$ in Figure \ref{fig:discErr-1D-RK2U2-c-cfl}: the convergence rate of $\varepsilon$ deviates from the formal order of 2 by less than 1\% when $\log \Delta x < -1.5$.

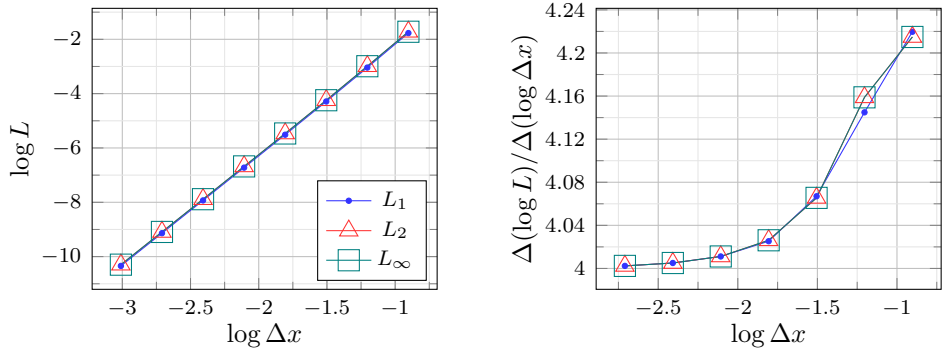
\begin{figure}[!htb]
\centering
\begin{tikzpicture}
\pgfplotstableread[skip first n=2]{figures/const_cfl/es_ex-1D-RK2U2-c-cfl.dat}{\table}
\begin{axis}[
    xtick distance = 0.5,
    ytick distance = 2,
    xlabel = $\log \Delta x$,
    ylabel = $\log L$,
    legend pos = south east
]
\addplot[color=blue!80, mark=*, mark size=1] table [x index=0, y index=1] {\table};
\addplot[color=red!80, mark=triangle, mark size=4] table [x index=0, y index=2] {\table};
\addplot[color=teal, mark=square, mark size=4] table [x index=0, y index=2] {\table};
\legend{$L_1$, $L_2$, $L_{\infty}$}
\end{axis}
\end{tikzpicture}
\hskip 20pt
\begin{tikzpicture}
\pgfplotstableread{figures/const_cfl/es_ex_slope-1D-RK2U2-c-cfl.dat}{\table}
\begin{axis}[
    xtick distance = 0.5,
    ytick distance = 0.04,
    xlabel = $\log \Delta x$,
    ylabel = $\Delta (\log L) / \Delta (\log \Delta x)$
]
\addplot[color=blue!80, mark=*, mark size=1] table [x index=0, y index=1] {\table};
\addplot[color=red!80, mark=triangle, mark size=4] table [x index=0, y index=2] {\table};
\addplot[color=teal, mark=square, mark size=4] table [x index=0, y index=2] {\table};
\end{axis}
\end{tikzpicture}
\caption{Different norms of $\Tilde{\phi}_{e} - \phi_{e}$ and their convergence rates in addition to the results in Figure \ref{fig:cNorm-1D-RK2U2-c-cfl}.}
\label{fig:es_ex-1D-RK2U2-c-cfl}
\end{figure}

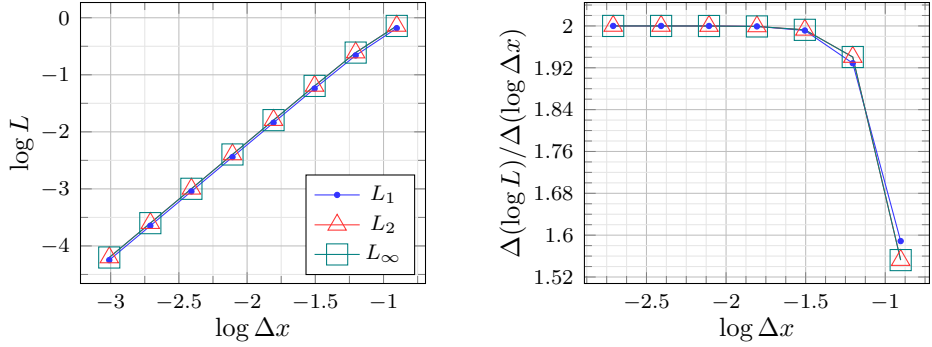
\begin{figure}[!htb]
\centering
\begin{tikzpicture}
\pgfplotstableread[skip first n=2]{figures/const_cfl/esErr-1D-RK2U2-c-cfl.dat}{\table}
\begin{axis}[
    xtick distance = 0.5,
    ytick distance = 1,
    xlabel = $\log \Delta x$,
    ylabel = $\log L$,
    legend pos = south east
]
\addplot[color=blue!80, mark=*, mark size=1] table [x index=0, y index=1] {\table};
\addplot[color=red!80, mark=triangle, mark size=4] table [x index=0, y index=2] {\table};
\addplot[color=teal, mark=square, mark size=4] table [x index=0, y index=2] {\table};
\legend{$L_1$, $L_2$, $L_{\infty}$}
\end{axis}
\end{tikzpicture}
\hskip 20pt
\begin{tikzpicture}
\pgfplotstableread{figures/const_cfl/esErr_slope-1D-RK2U2-c-cfl.dat}{\table}
\begin{axis}[
    minor tick num = 3,
    xtick distance = 0.5,
    ytick distance = 0.08,
    xlabel = $\log \Delta x$,
    ylabel = $\Delta (\log L) / \Delta (\log \Delta x)$
]
\addplot[color=blue!80, mark=*, mark size=1] table [x index=0, y index=1] {\table};
\addplot[color=red!80, mark=triangle, mark size=4] table [x index=0, y index=2] {\table};
\addplot[color=teal, mark=square, mark size=4] table [x index=0, y index=2] {\table};
\end{axis}
\end{tikzpicture}
\caption{Different norms of $\Tilde{\varepsilon}$ and their convergence rates in addition to the results in Figure \ref{fig:cNorm-1D-RK2U2-c-cfl}.}
\label{fig:app_es-1D-RK2U2-c-cfl}
\end{figure}

We also compare the estimated exact solution $\Tilde{\phi}_e$ with the exact solution $\phi_e$ to examine the accuracy of $\Tilde{\phi}_e$. Figure \ref{fig:es_ex-1D-RK2U2-c-cfl} shows that the error norms of $\Tilde{\phi}_e$ converge at a rate of 4, which is expected since the preset orders have proved correct. Furthermore, these error norms are between $10^{-2}$ and $10^{-10}$, which are much lower than those of $\phi_l$ shown in Figure \ref{fig:discErr-1D-RK2U2-c-cfl}. This superior accuracy of $\Tilde{\phi}_e$ suggests that the estimated DE, $\Tilde{\varepsilon}$, is close to the actual DE, $\varepsilon$. Indeed, we find excellent agreement between them in Figure \ref{fig:app_es-1D-RK2U2-c-cfl} and \ref{fig:discErr-1D-RK2U2-c-cfl}.


\FloatBarrier 
\subsubsection{Refinement with Constant Diffusion Number}
Here we show the applicability of POEM when the diffusion number is kept constant ($r_t = r_x^2$) during refinement. The test problem is solving the advection-diffusion equation with a source
\begin{equation}
    \frac{\partial \phi}{\partial t} + a \frac{\partial \phi}{\partial x} = \nu \frac{\partial \phi}{\partial x^2} + 4\pi^2 \nu^2 \cos[2\pi (x - at)]
\end{equation}
over the periodic domain $x \in [0,1]$ with the initial condition $2 + \cos(2\pi x)$. This problem has an analytical solution $\phi(x,t) = 2 + \cos[2\pi (x - at)]$. We are concerned with the solution at $t=2.5$ with $a=0.4$ and $\nu=0.01$. The time derivative and the advection term are discretized using the RK2U2 scheme, whereas the diffusion term is discretized using the fourth-order-centered approximation. When the diffusion number $\nu \Delta t / \Delta x^2$ is kept constant during refinement, such a discretized model is formally second-order-accurate. Therefore, we use the principal equation in Equation \ref{eq:model_path_k2} and the refinement ratios $r_t = r_x^2 = 0.5$.

\begin{figure}[!htb]
\centering
\begin{tikzpicture}
\pgfplotstableread[skip first n=2]{figures/const_dif/cNorm-1D-RK2U2-d-dif.dat}{\table}
\begin{axis}[
    xtick distance = 0.5,
    ytick distance = 1,
    xlabel = $\log \Delta x$,
    ylabel = $\log L_2$,
    legend pos = south east
]
\addplot[color=blue!80, mark=*, mark size=2] table [x index=0, y index=1] {\table};
\addplot[color=red!80, mark=triangle*, mark size=3] table [x index=0, y index=2] {\table};
\legend{$D_{2} \Delta x^{2}$, $D_{3} \Delta x^{3}$}
\end{axis}
\end{tikzpicture}
\hskip 20pt
\begin{tikzpicture}
\pgfplotstableread{figures/const_dif/cNorm_slope-1D-RK2U2-d-dif.dat}{\table}
\begin{axis}[
    xtick distance = 0.5,
    ytick distance = 0.2,
    xlabel = $\log \Delta x$,
    ylabel = $\Delta (\log L_2) / \Delta (\log \Delta x)$
]
\addplot[color=blue!80, mark=*, mark size=2] table [x index=0, y index=1] {\table};
\addplot[color=red!80, mark=triangle*, mark size=3] table [x index=0, y index=2] {\table};
\end{axis}
\end{tikzpicture}
\caption{The $L_2$-norm of effective coefficient terms and their convergence rates obtained by using the correct orders, 2 and 3, in the principal equation when the dimensions $x$ and $t$ are refined along the path of a constant diffusion number. Approximate solutions are obtained by solving the one-dimensional advection-diffusion equation using the RK2U2 scheme for the time derivative and the advection term and the fourth-order-centered approximation for the diffusion term. }
\label{fig:cNorm-1D-RK2U2-d-dif}
\end{figure}
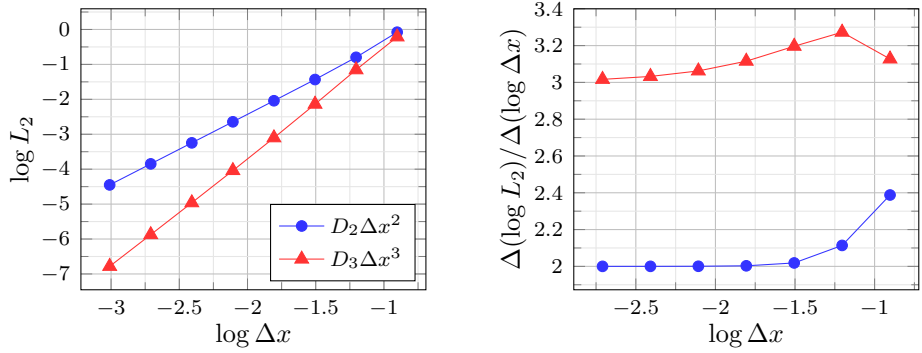

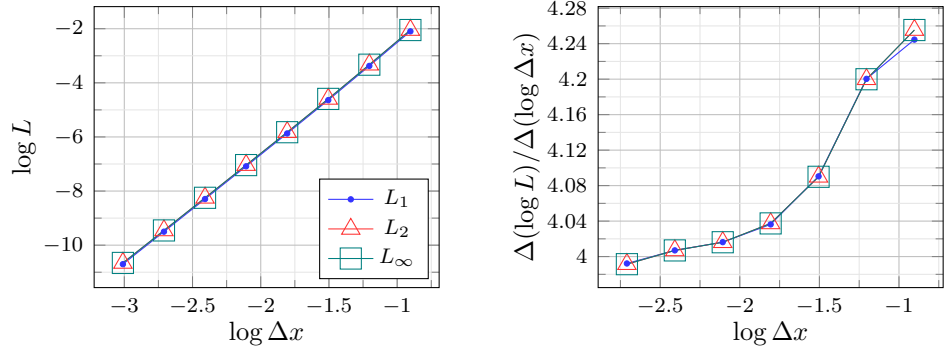
\begin{figure}[!htb]
\centering
\begin{tikzpicture}
\pgfplotstableread[skip first n=2]{figures/const_dif/es_ex-1D-RK2U2-d-dif.dat}{\table}
\begin{axis}[
    xtick distance = 0.5,
    ytick distance = 2,
    xlabel = $\log \Delta x$,
    ylabel = $\log L$,
    legend pos = south east
]
\addplot[color=blue!80, mark=*, mark size=1] table [x index=0, y index=1] {\table};
\addplot[color=red!80, mark=triangle, mark size=4] table [x index=0, y index=2] {\table};
\addplot[color=teal, mark=square, mark size=4] table [x index=0, y index=2] {\table};
\legend{$L_1$, $L_2$, $L_{\infty}$}
\end{axis}
\end{tikzpicture}
\hskip 20pt
\begin{tikzpicture}
\pgfplotstableread{figures/const_dif/es_ex_slope-1D-RK2U2-d-dif.dat}{\table}
\begin{axis}[
    xtick distance = 0.5,
    ytick distance = 0.04,
    xlabel = $\log \Delta x$,
    ylabel = $\Delta (\log L) / \Delta (\log \Delta x)$
]
\addplot[color=blue!80, mark=*, mark size=1] table [x index=0, y index=1] {\table};
\addplot[color=red!80, mark=triangle, mark size=4] table [x index=0, y index=2] {\table};
\addplot[color=teal, mark=square, mark size=4] table [x index=0, y index=2] {\table};
\end{axis}
\end{tikzpicture}
\caption{Different norms of $\Tilde{\phi}_{e} - \phi_{e}$ and their convergence rates in addition to the results in Figure \ref{fig:cNorm-1D-RK2U2-d-dif}.}
\label{fig:es_ex-1D-RK2U2-d-dif}
\end{figure}

The $L_2$-norms of the effective coefficient terms and their convergence rates are shown in Figure \ref{fig:cNorm-1D-RK2U2-d-dif}. With Equations \ref{eq:beta_tilde} and \ref{eq:correctPreset}, we can conclude that the preset orders 2 and 3 are correct and that the asymptotic range begins around $\log \Delta x = -2.7$. The error norms of the estimated exact solution $\Tilde{\phi}_e$ are plotted in Figure \ref{fig:es_ex-1D-RK2U2-d-dif}. We observe the expected convergence rate of 4 and the superior precision of $\Tilde{\phi}_e$.


\FloatBarrier
\subsubsection{Refinement in 2 + 1 Dimensions} \label{subsec:2+1_dim}
In this last example, we show that POEM is applicable when two space dimensions and a time dimension are refined. We demonstrate with the following problem: solving the two-dimensional linear advection equation of $\phi(x,y,t)$,
\begin{equation}
    \frac{\partial \phi}{\partial t} + a_x \frac{\partial \phi}{\partial x} + a_y \frac{\partial \phi}{\partial y} = 0,
\end{equation}
at $t = 2$ over the periodic domain $(x,y) \in [0,1] \cross [0,1]$ with the initial condition $\phi(x,y,0) = 2 + \cos[2\pi (x + y)]$ and advection speeds $a_x = a_y = 0.25$. This equation is discretized using the RK2U2 scheme. We consider the refinement path in which the two CFL numbers $a_x \Delta t / \Delta x$ and $a_y \Delta t / \Delta y$ are constant and use a refinement ratio of 0.5 in all dimensions. Hence, we have $r_t = r_x = r_y = 0.5$. We apply the principal equation in Equation \ref{eq:model_path_k2}. In fact, the principal equation in Equation \ref{eq:model_path} can be recovered with $q_1 = 2$ in this case.

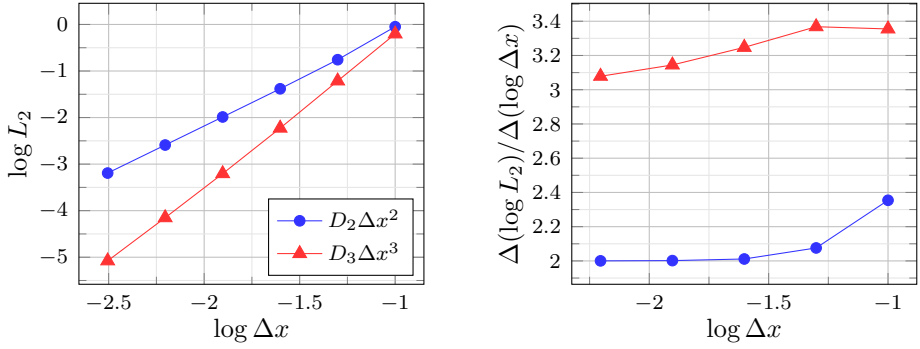
\begin{figure}[!htb]
\centering
\begin{tikzpicture}
\pgfplotstableread[skip first n=2]{figures/extend_2D/cNorm-2D-RK2U2-c.dat}{\table}
\begin{axis}[
    xtick distance = 0.5,
    ytick distance = 1,
    xlabel = $\log \Delta x$,
    ylabel = $\log L_2$,
    legend pos = south east
]
\addplot[color=blue!80, mark=*, mark size=2] table [x index=0, y index=1] {\table};
\addplot[color=red!80, mark=triangle*, mark size=3] table [x index=0, y index=2] {\table};
\legend{$D_{2} \Delta x^{2}$, $D_{3} \Delta x^{3}$}
\end{axis}
\end{tikzpicture}
\hskip 20pt
\begin{tikzpicture}
\pgfplotstableread{figures/extend_2D/cNorm_slope-2D-RK2U2-c.dat}{\table}
\begin{axis}[
    xtick distance = 0.5,
    ytick distance = 0.2,
    xlabel = $\log \Delta x$,
    ylabel = $\Delta (\log L_2) / \Delta (\log \Delta x)$
]
\addplot[color=blue!80, mark=*, mark size=2] table [x index=0, y index=1] {\table};
\addplot[color=red!80, mark=triangle*, mark size=3] table [x index=0, y index=2] {\table};
\end{axis}
\end{tikzpicture}
\caption{The $L_2$-norm of effective coefficient terms and their convergence rates obtained by using the correct orders, 2 and 3, in the principal equation when the dimensions $x$, $y$, and $t$ are refined along the path of constant CFL numbers. The approximate solutions are obtained by solving the two-dimensional advection equation using the RK2U2 scheme.}
\label{fig:cNorm-2D-RK2U2-c}
\end{figure}

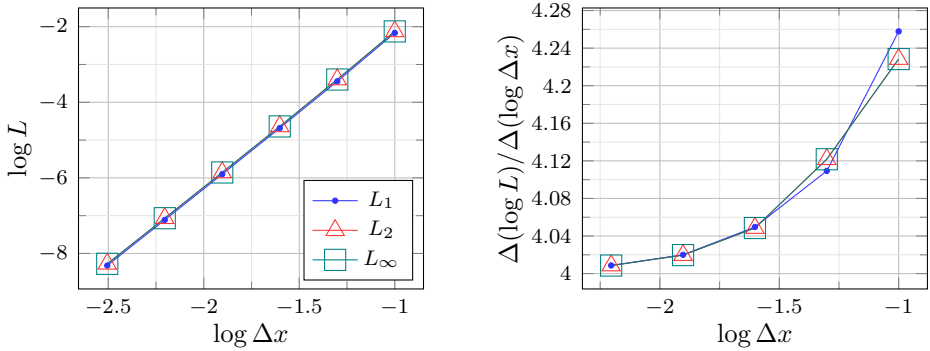
\begin{figure}[!htb]
\centering
\begin{tikzpicture}
\pgfplotstableread[skip first n=2]{figures/extend_2D/es_ex_DE-2D-RK2U2-c.dat}{\table}
\begin{axis}[
    xtick distance = 0.5,
    ytick distance = 2,
    xlabel = $\log \Delta x$,
    ylabel = $\log L$,
    legend pos = south east
]
\addplot[color=blue!80, mark=*, mark size=1] table [x index=0, y index=1] {\table};
\addplot[color=red!80, mark=triangle, mark size=4] table [x index=0, y index=2] {\table};
\addplot[color=teal, mark=square, mark size=4] table [x index=0, y index=2] {\table};
\legend{$L_1$, $L_2$, $L_{\infty}$}
\end{axis}
\end{tikzpicture}
\hskip 20pt
\begin{tikzpicture}
\pgfplotstableread{figures/extend_2D/es_ex_DE_slope-2D-RK2U2-c.dat}{\table}
\begin{axis}[
    xtick distance = 0.5,
    ytick distance = 0.04,
    xlabel = $\log \Delta x$,
    ylabel = $\Delta (\log L) / \Delta (\log \Delta x)$
]
\addplot[color=blue!80, mark=*, mark size=1] table [x index=0, y index=1] {\table};
\addplot[color=red!80, mark=triangle, mark size=4] table [x index=0, y index=2] {\table};
\addplot[color=teal, mark=square, mark size=4] table [x index=0, y index=2] {\table};
\end{axis}
\end{tikzpicture}
\caption{Different norms of $\Tilde{\phi}_{e} - \phi_{e}$ and their convergence rates in addition to the results in Figure \ref{fig:cNorm-2D-RK2U2-c}.}
\label{fig:es_ex-2D-RK2U2-c}
\end{figure}

The effective coefficient terms and the error of the estimated exact solution are plotted in Figure \ref{fig:cNorm-2D-RK2U2-c} and \ref{fig:es_ex-2D-RK2U2-c} respectively. In particular, the estimated exact solution shows a higher order of accuracy and a superior accuracy.


\subsection{Computational Demand}
Like many other grid refinement methods, POEM itself has a relatively low computational cost compared with the simulations calculating the approximate solutions required. However, the requirement of an approximate solution per refinement level may lead to a significant computational cost. Suppose the computational grid is refined along $d$ dimensions with a refinement ratio $r$. Then the number of grid points increases by $1/r^d$ times with $r$. Thus, the commonly used grid doubling ($r = 0.5$) strategy corresponds to an increase rate of $2^d$. The resulting large number of grid points would pose a huge computational burden to the simulations.

A less expensive strategy is to use refinement ratios greater than 0.5, which is compatible with POEM. However, such a strategy involves complications in finding the common locations of the grid points on all refinement levels. This would restrict us to estimating local errors at critical locations of the domain \citep{Roache1994Perspective:Studies,Trivedi2019}. In the next section, we try to resolve those complications so that POEM can be applied to the entire domain.


\FloatBarrier
\section{Method of Interpolating Differences between Approximate Solutions (MIDAS)} \label{sec:MIDAS}
In this section, we present a method called MIDAS to reduce the overall computational cost when POEM is applied. We first study the distribution of shared grid points in the domain when fractional refinement ratios are used. Then, we address its relation to the computational cost and propose how to increase the number of shared grid points. Lastly, we show that MIDAS does not have a significant effect on the estimated DE.


\subsection{Distribution of Shared Grid Points in Fractional Refinement}
When we try to apply POEM to local system quantities over the entire domain, we need to know the distribution of the \textit{shared grid points}, i.e. the common locations of grid points on different refinement levels. In the case of grid doubling ($r=0.5$), a shared grid point is located at each grid point on the coarser grid and alternately resides on the finer grid. However, in the case of refinement ratios greater than 0.5, this distribution is not obvious.

To simplify the problem, we restricted ourselves to the study of systematic grid refinement for Cartesian grids. This means that the grid spacing is uniform on each refinement level and that the same refinement ratio is applied throughout the domain \citep{Roy2010}. In addition, we introduce the concept of an \textit{irreducible unit} which is defined in Definition \ref{def:IU} and illustrated in Figure \ref{fig:setG}.
\begin{definition} \label{def:IU}
    Let $G$ be a set of systematically-refined grids with uniform grid spacing. An irreducible unit of $G$ is the smallest repeating unit in $G$.
\end{definition}

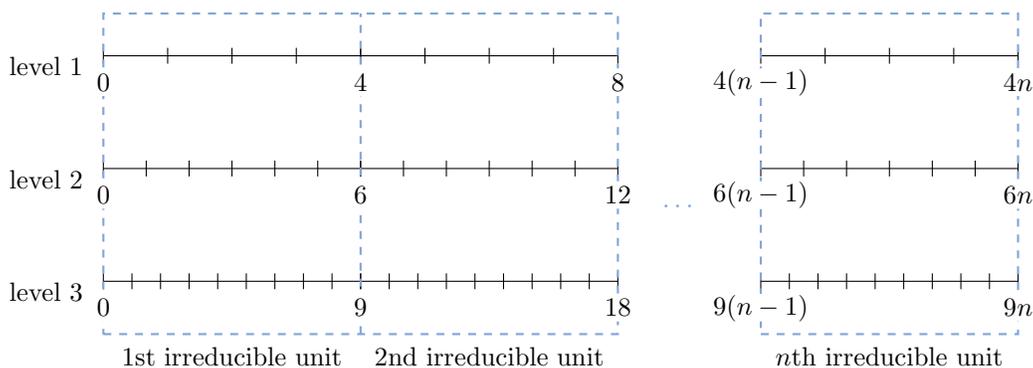
\begin{figure}[!htb]
\centering
\begin{tikzpicture}[
    xscale=0.38,
    font=\footnotesize,
    bx/.style={color=blue!70!green!50, thick, dashed}
]
    \foreach \st in {0,9,23}{
        \draw (\st, 3.0) -- (\st + 9, 3.0);
        \draw (\st, 1.5) -- (\st + 9,1.5);
        \draw (\st, 0.0) -- (\st + 9,0.0);
    
        \foreach \x in {0,...,4}{
            \draw ({\st + \x*9/4}, 3.0cm - 3pt) -- ({\st + \x*9/4}, 3.0cm + 3pt);
        }
        \foreach \x in {0,...,6}{
            \draw ({\st + \x*9/6}, 1.5cm - 3pt) -- ({\st + \x*9/6}, 1.5cm + 3pt);
        }
        \foreach \x in {0,...,9}{
            \draw ({\st + \x}, -3pt) -- ({\st + \x}, 3pt);
        }
    }
    
    \node[bx] at (20cm + 5pt, 1.0) {\ldots};
    
    \draw[bx] (0, 0cm - 20pt) rectangle (18, 3.0cm + 16pt);
    \draw[bx] (9, 0cm - 20pt) -- (9, 3.0cm + 16pt);
    \draw[bx] (23, 0cm - 20pt) rectangle (23 + 9, 3.0cm + 16pt);
    
    \foreach \st in {0,9,18,23,32}{
        \foreach \y in {3.0,1.5,0.0}{
            \draw[color=white, fill=white] (\st*1cm - 5pt, \y*1cm - 10pt - 5pt) rectangle (\st*1cm + 5pt, \y*1cm - 10pt + 5pt);
        }
    }
    
    \foreach \x in {0,4,8}{
        \node at (\x*9/4, 3.0cm - 10pt) {$\x$};
    }
    \foreach \x in {0,6,12}{
        \node at (\x*9/6, 1.5cm - 10
        pt) {$\x$};
    }
    \foreach \x in {0,9,18}{
        \node at (\x, -10pt) {$\x$};
    }
    \node at (23 + 0, {3.0 cm - 10pt}) {$4(n-1)$};
    \node at (23 + 9, {3.0 cm - 10pt}) {$4n$};
    \node at (23 + 0, {1.5 cm - 10pt}) {$6(n-1)$};
    \node at (23 + 9, {1.5 cm - 10pt}) {$6n$};
    \node at (23 + 0, {0.0 cm - 10pt}) {$9(n-1)$};
    \node at (23 + 9, {0.0 cm - 10pt}) {$9n$};
    
    \node at ({0 + 4.5}, -1) {1st irreducible unit};
    \node at ({9 + 4.5}, -1) {2nd irreducible unit};
    \node at ({23 + 4.5}, -1) {$n$th irreducible unit};
    
    \node at (-2, 3.0cm - 4pt) {level 1};
    \node at (-2, 1.5cm - 4pt) {level 2};
    \node at (-2, 0.0cm - 4pt) {level 3};
    
\end{tikzpicture}
\caption{Schematic diagram of $G$ consisting of three grid levels characterized by a refinement ratio of $\frac{2}{3}$. The margin of irreducible units are depicted by dashed lines.}
\label{fig:setG}
\end{figure}

Definition \ref{def:IU} is made in such a way that a shared grid point resides only at the margins of an irreducible unit, thus reducing the complexity of the problem. Nevertheless, an irreducible unit is still hypothetical up to this point. We now propose the following.

\begin{proposition} \label{prop:IU_u&e}
    Let $S_l$ be the number of grid segments on the $l$th level of $G$, and let $s_l$ be the number of grid segments on the $l$th level of the irreducible unit. Also, denote $gcd(\cdot)$ the greatest common divisor of a set of integers. Then an irreducible unit of $G$ exists and is unique. Furthermore, the irreducible unit repeats $gcd(\{S_l\})$ times in $G$ and $s_l = S_l/gcd(\{S_l\})$.
\end{proposition}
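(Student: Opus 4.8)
The plan is to argue directly from the geometry of the uniform grids. Normalizing the common domain to the unit interval $[0,1]$ (the reasoning applies verbatim dimension by dimension for a Cartesian box), the grid on level $l$ consists of the points $i/S_l$ for $i=0,1,\dots,S_l$, so its $S_l$ segments all have equal length $1/S_l$. Following Definition \ref{def:IU}, I will regard a repeating unit as a subinterval $U=[0,w]$ such that $[0,1]$ is tiled by $N:=1/w$ consecutive copies of $U$ and, on every level, each copy carries an identical (translated) configuration of grid points. The strategy is to first determine exactly which widths $w$ are admissible, and then to single out the smallest admissible unit.

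The key step is the characterization of admissible widths. A partition of $[0,1]$ into $N$ equal pieces has boundaries at $j/N$ for $j=0,1,\dots,N$, and this yields a repeating unit precisely when, on every level, each boundary $j/N$ is itself a grid point; only then does every piece contain the same relative pattern of points. Since the level-$l$ grid points are exactly the integer multiples of $1/S_l$, the single boundary $1/N$ is a grid point if and only if $N \mid S_l$, and this condition already forces every $j/N$ to be a grid point. Imposing it simultaneously on all levels shows that the admissible copy counts $N$ are exactly the common divisors of the set $\{S_l\}$, and that the associated unit $[0,1/N]$ contains $S_l/N$ segments on level $l$. In particular $N=1$ (the whole domain) is always admissible, so a repeating unit exists.

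The smallest repeating unit is the one realized by the largest admissible $N$, and among all common divisors of $\{S_l\}$ the largest is by definition $gcd(\{S_l\})$. Because $gcd(\{S_l\})$ is a well-defined positive integer and is attained by a unique maximizing partition, both existence and uniqueness of the irreducible unit follow at once: it is the interval $[0,\,1/gcd(\{S_l\})]$, it repeats $gcd(\{S_l\})$ times in $G$, and on level $l$ it carries $s_l = S_l/gcd(\{S_l\})$ segments, as claimed. One may check this against Figure \ref{fig:setG}: with $S_1,S_2,S_3 = 4n,6n,9n$ one has $gcd(\{S_l\})=n$ and $s_l = 4,6,9$.

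The part that will demand the most care is the formalization of \emph{smallest repeating unit} and the proof that the admissible widths are \emph{exactly} the reciprocals of common divisors of $\{S_l\}$. The nontrivial direction is to rule out any repeating unit not produced by this alignment mechanism; here the uniformity of each Cartesian level is essential, since it forces the tiling boundaries to coincide with grid points on every level simultaneously, hence to be integer multiples of each spacing $1/S_l$. Once the equivalence ``repeating unit $\Leftrightarrow$ $N \mid S_l$ for all $l$'' is secured, the segment count and the passage to the greatest common divisor are immediate.
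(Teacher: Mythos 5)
Your proof is correct and rests on the same pivot as the paper's: the irreducible unit is identified through $gcd(\{S_l\})$. The difference is one of completeness rather than of route. The paper's entire proof is an appeal to the existence and uniqueness of the greatest common divisor (extended to several integers via $gcd(S_1,S_2,S_3) = gcd(gcd(S_1,S_2),S_3)$), followed by the bare assertion that $G$ ``can be divided into at most $gcd(\{S_l\})$ identical units, which are exactly the irreducible units.'' It never establishes the step you correctly isolate as the crux: that a tiling of $G$ by $N$ identical copies exists \emph{if and only if} $N$ divides every $S_l$, i.e.\ that the tile boundaries are forced to coincide with grid points on all levels simultaneously. Your displacement argument supplies precisely this missing link: if $N \nmid S_l$, the first copy carries a level-$l$ grid point at relative position $0$ while the second copy's first level-$l$ grid point sits at relative position $\big((-S_l) \bmod N\big)/(N S_l) > 0$, so the two copies cannot carry identical patterns. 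With that equivalence in hand, existence and uniqueness of the irreducible unit reduce to the fact that the common divisors of $\{S_l\}$ form a finite nonempty set (it contains $1$) whose maximum is $gcd(\{S_l\})$, and the counts $s_l = S_l/gcd(\{S_l\})$ follow by arithmetic. In short, you prove what the paper asserts. The only loose end in your write-up is rhetorical: you defer the ``only then'' direction to a concluding caveat as still demanding care, when the one-line computation you already sketched closes it and could simply be incorporated into the body of the argument.
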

\begin{proof}
    From the theory of numbers, the greatest common divisor of two integers exists and is unique \citep{Tattersall2005}. This is also true for multiple integers since $gcd(S_1,S_2,S_3) = gcd(gcd(S_1,S_2),S_3)$. So is $gcd(\{S_l\})$. Therefore, $G$ can be divided into at most $gcd(\{S_l\})$ identical units, which are exactly the irreducible units. Furthermore, every unit contains $S_l/gcd(\{S_l\})$ segments at the $l$th level.
\end{proof}

From Definition \ref{def:IU} and Proposition \ref{prop:IU_u&e}, it follows that if two sets of grids contain the same number of refinement levels and are characterized by the same refinement ratio, then they are identified by the same irreducible unit. In addition, the knowledge that the irreducible unit repeats in a simulation domain is useful for programming: operations over the domain can be implemented by applying the operations on an irreducible unit in each iteration and advancing in a step size of the irreducible unit.

With the concept of irreducible units, we can examine what forms of $r$ maximize the proportion of shared grid points in $G$. Consider a set of two-level irreducible units characterized by various $\{s_1,s_2\}$. Since a shared grid point resides only at the margins, these units must contain the same number of shared grid points. What affects the proportion is the number of interior grid points. Since any additional point in the interior grid will lower the proportion, the lowest proportion is attained when the next refinement level contains one more point in the interior grid than the current level, i.e. $s_2 = s_1 + 1$. Therefore, to maximize the proportion of shared grid points, one should choose refinement ratios of the form $r = s_1 / (s_1 + 1)$, which are referred to as \textit{fractional refinement ratios}. We can further deduce that the grid doubling, where $r=0.5$ and $s_1=1$, corresponds to the maximum proportion.


\subsection{Creating Shared Grid Points by Interpolation} \label{subsec:interp}
If fractional refinement is applied, the targets of POEM, $\Tilde{\phi}_{e}$ and $\{C_{p}h^p\}$, will be sparser in the domain compared to grid doubling. Consequently, the estimated DE will contain a larger statistical error due to the smaller sample size, i.e. $N$ in Equation \ref{eq:L1_norm} -- \ref{eq:L8_norm}. While it seems to be a trade-off between the computational cost and the confidence of the results, MIDAS can be used to compensate the increased uncertainty by adding shared grid points to the domain.

According to Proposition \ref{prop:IU_u&e}, the boundaries of an irreducible unit are the only places where $\phi$ is well defined on all the grid levels and therefore where $\Tilde{\phi}_e$ and $\{C_{p}h^p\}$ can be calculated by POEM. The goal of MIDAS is to obtain $\Tilde{\phi}_e$ and $\{C_{p}h^p\}$ additionally at the locations in the interior where $\phi$ is defined on at least one of the levels. For simplicity we call them the \textit{objective locations}.

The basis of MIDAS, the development of which is inspired by the completed Richardson extrapolation \citep{Roache1993,Richards1997}, is to operate on the differences between approximate solutions
\begin{equation} \label{eq:eij_infty}
    e_{ij} = \phi_i - \phi_j = \sum_{m=1}^{\infty} C_{q_m} h^{q_m} \big[ r^{(i-j)q_m} - 1 \big] r^{(j-1)q_m}
\end{equation}
which eliminates $\Tilde{\phi}_{e}$ in Equation \ref{eq:model_infty}. By these means, a linear operation on $e_{ij}$ applies to all the coefficients $\{C_p\}$ simultaneously. By interpolating $\{e_{ij}\}$ at existing shared grid points to the objective locations, we form a system of equations in terms of the known $\{e_{ij}\}$ and unknown $\{C_p h^p\}$ at the objective locations. By solving this system we obtain $\{C_p h^p\}$, and by subtracting them from $\phi$ we obtain $\Tilde{\phi}_{e}$ (see Equation \ref{eq:model_infty}).

For clarity, we explain this approach with an example. Let us consider the simple but non-trivial case that $G$ comprises three grids characterized by a refinement ratio of $\frac{2}{3}$. By Proposition \ref{prop:IU_u&e}, there are $\{s_l\}=\{4,6,9\}$ grid segments in the irreducible unit, as shown in Figure \ref{fig:grids-1D}.

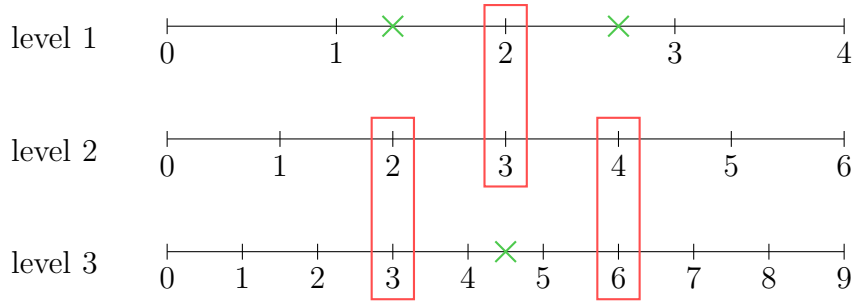
\begin{figure}[!htb]
\centering
\begin{tikzpicture}[
    shared/.style={color=red!70, thick},
    interp/.style={color=green!70!black!70, style=cross out, draw, thick, inner sep=0pt, minimum size=7pt}
]
    \draw (0,3.0) -- (9,3.0);
    \draw (0,1.5) -- (9,1.5);
    \draw (0,0.0) -- (9,0.0);
    
    \foreach \x in {0,...,4}{
        \node at ({\x*9/4}, 3.0cm - 10pt) {$\x$};
        \draw ({\x*9/4}, 3.0cm - 3pt) -- ({\x*9/4}, 3.0cm + 3pt);
    }
    \foreach \x in {0,...,6}{
        \node at ({\x*9/6}, 1.5cm - 10pt) {$\x$};
        \draw ({\x*9/6}, 1.5cm - 3pt) -- ({\x*9/6}, 1.5cm + 3pt);
    }
    \foreach \x in {0,...,9}{
        \node at (\x, -10pt) {$\x$};
        \draw (\x, -3pt) -- (\x, 3pt);
    }
    
    \draw[shared] (4.5cm - 8pt, 1.5cm - 18pt) rectangle (4.5cm + 8pt, 3.0cm + 8pt);
    \draw[shared] (3.0cm - 8pt, 0.0cm - 18pt) rectangle (3.0cm + 8pt, 1.5cm + 8pt);
    \draw[shared] (6.0cm - 8pt, 0.0cm - 18pt) rectangle (6.0cm + 8pt, 1.5cm + 8pt);
    
    \node[interp] at (3.0,3) {};
    \node[interp] at (6.0,3) {};
    \node[interp] at (4.5,0) {};
    
    \node at (-1.5, 3.0cm - 4pt) {level 1};
    \node at (-1.5, 1.5cm - 4pt) {level 2};
    \node at (-1.5, 0.0cm - 4pt) {level 3};
    
\end{tikzpicture}
\caption{The irreducible unit of $G$ in Figure \ref{fig:setG} (dashed box). The numbers refer to the grid point indices of the corresponding refinement level. In addition to the shared grid points at the boundaries that are shared by all the grids, there are grid points in the interior that are shared by two of the grids (red boxes). The latter are the objective locations (green crosses) in this example.}
\label{fig:grids-1D}
\end{figure}

In this example, we choose the objective locations to be where $\phi$ is well defined on two grid levels. They are also where one of $e_{12}$ and $e_{23}$ is well defined. These locations can be found by advancing grid points in a step size of $s_1/gcd(s_1,s_2) = s_2/gcd(s_2,s_3) = 2$ on the coarser grid or $s_2/gcd(s_1,s_2) = s_3/gcd(s_2,s_3) = 3$ on the finer grid in light of Proposition \ref{prop:IU_u&e}.

Let $x \in [0,1]$ be the domain of the irreducible unit. We interpolate $e_{ij}$ at the objective locations $x_o \in \big\{ \frac{1}{3}, \frac{1}{2}, \frac{2}{3} \big\}$ using $e_{ij}$ at the nearest two points $x_a,x_b \in \big\{ 0, \frac{1}{3}, \frac{1}{2}, \frac{2}{3}, 1 \big\}$ with suitable weights $\Gamma_a,\Gamma_b$. This can be formulated as follows.
\begin{align}
    e_{ij}(x_o) &\equiv \Gamma_a e_{ij}(x_a) + \Gamma_b e_{ij}(x_b) \label{eq:eij_extend} \\
    &= \sum_{m=1}^{\infty} \big[ \Gamma_a C_{q_m}(x_a) + \Gamma_b C_{q_m}(x_b) \big] h^{q_m} \big[ r^{(i-j)q_m} - 1 \big] r^{(j-1)q_m} \\
    &= \sum_{m=1}^{\infty} \big[ C_{q_m}(x_a) + \mathcal{O}(h^2) \big] h^{q_m} \big[ r^{(i-j)q_m} - 1 \big] r^{(j-1)q_m},
\end{align}
where 
\begin{equation} \label{eq:C_xo}
    C_{q_m}(x_o) = \Gamma_a C_{q_m}(x_a) + \Gamma_b C_{q_m}(x_b) + \mathcal{O}(h^{2})
\end{equation}
has been used. Using $x_o = \frac{1}{3}$ as an example, we use $x_a = 0$, $x_b = \frac{1}{2}$, $\Gamma_a = \frac{1}{3}$, $\Gamma_b = \frac{2}{3}$. We note that with a linear interpolation the interpolation error is of the order $h^{q_1+2}$; therefore, the coefficient terms of orders $q_1$ and $q_1 + 1$ are not affected. If higher-order coefficient terms are considered, a higher-order interpolation can be used to increase the order of the interpolation error.

Subsequently, we can form a system of equations in terms of $e_{ij}(x_o)$:
\begin{align} \label{eq:sys2eq_x}
    \begin{bmatrix}
        (r^{p_1} - 1) & (r^{p_2} - 1) \\
        (r^{p_1} - 1) r^{p_1} & (r^{p_2} - 1) r^{p_2}
    \end{bmatrix}
    \begin{bmatrix}
        C_{p_1} h^{p_1} \\
        C_{p_2} h^{p_2}
    \end{bmatrix}
    &=
    \begin{bmatrix}
        e_{21} \\
        e_{32}
    \end{bmatrix},
\end{align}
where $\{q_m\}$ are replaced by $\{p_m\}$ following the procedures of POEM.\footnote{In fact, this system of equations can be derived from Equation \ref{eq:sys3eq_t}.} Similarly, $C_{p_1} h^{p_1}$ and $C_{p_2} h^{p_2}$ can be obtained by solving this system. However, $\Tilde{\phi}_{e}$ is obtained by subtracting $C_{p_1} h^{p_1}$ and $C_{p_2} h^{p_2}$ from a well-defined $\phi$, according to Equation \ref{eq:model_orig}.

Now we evaluate the increased proportion of shared grid points due to the use of MIDAS. In the irreducible unit, there are four shared grid points: three created by interpolation and one on the boundary. The other boundary point is neglected because it overlaps with the adjacent irreducible unit. As a result, the proportion of shared grid points has increased from 11\% to 44\% on the finest grid which consumes the most computational resources. In comparison, the proportion is 25\% when grid doubling is applied only. In principle, the proportion can be further increased by interpolating to locations where $\phi$ is well defined at one single grid level.


\subsection{Demonstration of Consistencies} \label{subsec:consistencies}
Here we demonstrate that the results obtained by applications of MIDAS using various refinement ratios are consistent with those obtained by grid doubling.

Since the number of grid segments must be an integer, a grid cannot be refined with a fractional refinement ratio for an arbitrary number of times. For instance, a grid which contains four grid segments can be refined only twice with a ratio of $\frac{2}{3}$ (see Figure \ref{fig:grids-1D}); a further refinement would result in a grid with 13.5 grid segments, which is impossible.

In order to make comparisons in a range of grid resolutions, we employ two refinement schemes: \textit{global} refinement and \textit{local} refinement. While the global refinement ratio is always 0.5, the local refinement ratio is one of $\{\frac{2}{3}, \frac{3}{4}, \frac{4}{5}, \frac{9}{10}\}$, denoted by $r_x$. For each $r_x$, we refine the coarsest grid of the respective irreducible unit twice with the local refinement ratio to form a set of three grids, in which POEM is applied. Then we refine each of them with the global refinement ratio. As a result, the grids generated in terms of the number of grid segments are $\{\{4,6,9\}, \{\{8,12,18\}, \{16,24,36\}, \dots \}$ for the case of $r_x = \frac{2}{3}$. We note that this procedure generates more than sufficient grid points, posing unnecessary computational burdens; a more practical approach is presented in Section \ref{sec:POEM-MIDAS}.

For the test case, we use the advection problem in Section \ref{subsubsec:realize_correct} and the refinement path of constant CFL number. Based on Equations \ref{eq:model_path_k2} and \ref{eq:eij_infty}, we describe the differences between approximate solutions as follows.
\begin{equation}
    e_{ij} = D_{2} \Delta x^{2} \big[ r_x^{2(i-j)} - 1 \big] r_x^{2(j-1)} + D_{3} \Delta x^{3} \big[ r_x^{3(i-j)} - 1 \big] r_x^{3(j-1)}.
\end{equation}

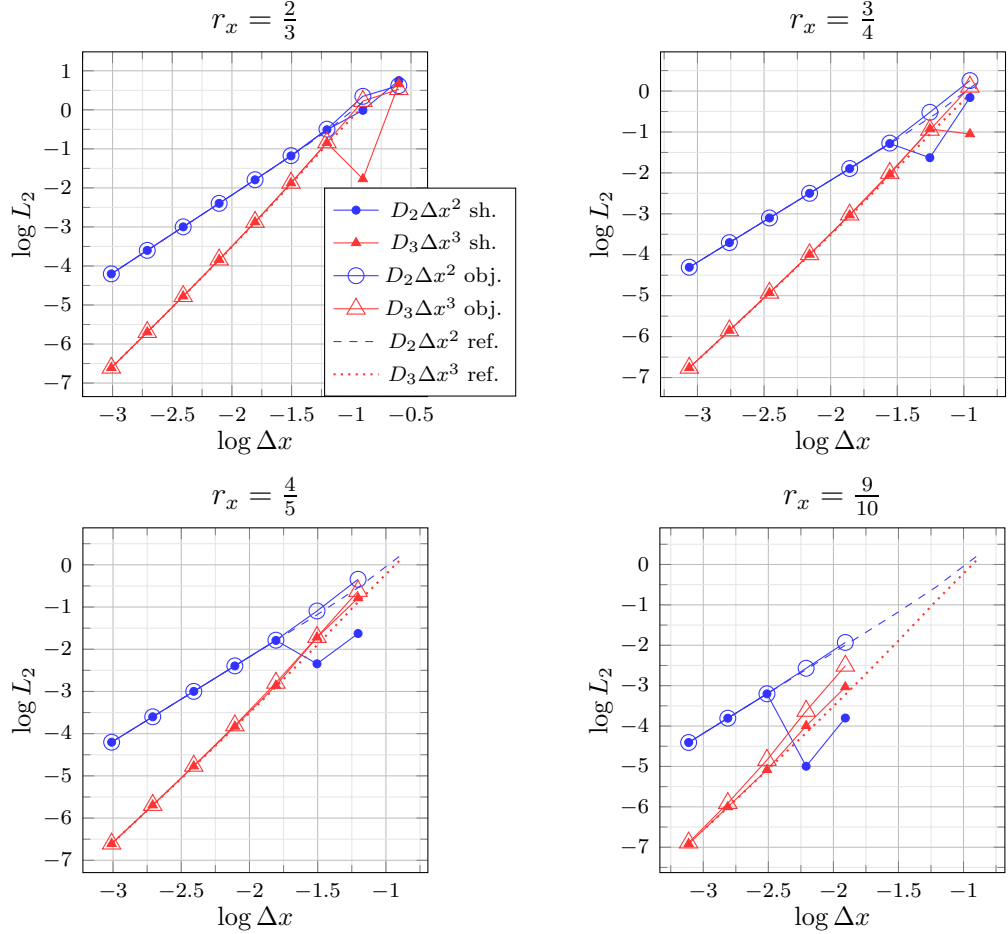
\begin{figure}[!htb]
\centering
\pgfplotstableread[skip first n=2]{figures/const_cfl/cNorm-1D-RK2U2-c-cfl.dat}{\table}
\begin{tikzpicture}
\pgfplotstableread[skip first n=2]{figures/consistencies/cons-1D-RK2U2-c0.67.dat}{\loadedtable}
\pgfplotsset{
    width=0.45\textwidth,
    height=0.45\textwidth
}
\begin{axis}[
    xtick distance = 0.5,
    ytick distance = 1,
    xlabel = $\log \Delta x$,
    ylabel = $\log L_2$,
    legend style={at={(0.7,0)},anchor=south west},
    title = {$r_x=\frac{2}{3}$}
]
\addplot[color=blue!80, mark=*, mark size=1.5] table [x index=0, y index=1] {\loadedtable};
\addplot[color=red!80, mark=triangle*, mark size=2] table [x index=0, y index=2] {\loadedtable};
\addplot[color=blue!80, mark=o, mark size=3] table [x index=0, y index=3] {\loadedtable};
\addplot[color=red!80, mark=triangle, mark size=4] table [x index=0, y index=4] {\loadedtable};
\addplot[color=blue!80, style=dashed] table [x index=0, y index=1] {\table};
\addplot[color=red!80, style=dotted, thick] table [x index=0, y index=2] {\table};
\legend{$D_{2} \Delta x^{2}$ sh., $D_{3} \Delta x^{3}$ sh., $D_{2} \Delta x^{2}$ obj., $D_{3} \Delta x^{3}$ obj., $D_{2} \Delta x^{2}$ ref., $D_{3} \Delta x^{3}$ ref.}
\end{axis}
\end{tikzpicture}
\hskip 20pt
\begin{tikzpicture}
\pgfplotstableread[skip first n=2]{figures/consistencies/cons-1D-RK2U2-c0.75.dat}{\loadedtable}
\pgfplotsset{
    width=0.45\textwidth,
    height=0.45\textwidth
}
\begin{axis}[
    xtick distance = 0.5,
    ytick distance = 1,
    xlabel = $\log \Delta x$,
    ylabel = $\log L_2$,
    legend pos = south east,
    title = {$r_x=\frac{3}{4}$}
]
\addplot[color=blue!80, mark=*, mark size=1.5] table [x index=0, y index=1] {\loadedtable};
\addplot[color=red!80, mark=triangle*, mark size=2] table [x index=0, y index=2] {\loadedtable};
\addplot[color=blue!80, mark=o, mark size=3] table [x index=0, y index=3] {\loadedtable};
\addplot[color=red!80, mark=triangle, mark size=4] table [x index=0, y index=4] {\loadedtable};
\addplot[color=blue!80, style=dashed] table [x index=0, y index=1] {\table};
\addplot[color=red!80, style=dotted, thick] table [x index=0, y index=2] {\table};
\end{axis}
\end{tikzpicture}
\hfill
\begin{tikzpicture}
\pgfplotstableread[skip first n=2]{figures/consistencies/cons-1D-RK2U2-c0.8.dat}{\loadedtable}
\pgfplotsset{
    width=0.45\textwidth,
    height=0.45\textwidth
}
\begin{axis}[
    xtick distance = 0.5,
    ytick distance = 1,
    xlabel = $\log \Delta x$,
    ylabel = $\log L_2$,
    legend pos = south east,
    title = {$r_x=\frac{4}{5}$}
]
\addplot[color=blue!80, mark=*, mark size=1.5] table [x index=0, y index=1] {\loadedtable};
\addplot[color=red!80, mark=triangle*, mark size=2] table [x index=0, y index=2] {\loadedtable};
\addplot[color=blue!80, mark=o, mark size=3] table [x index=0, y index=3] {\loadedtable};
\addplot[color=red!80, mark=triangle, mark size=4] table [x index=0, y index=4] {\loadedtable};
\addplot[color=blue!80, style=dashed] table [x index=0, y index=1] {\table};
\addplot[color=red!80, style=dotted, thick] table [x index=0, y index=2] {\table};
\end{axis}
\end{tikzpicture}
\hskip 53pt 
\begin{tikzpicture}
\pgfplotstableread[skip first n=2]{figures/consistencies/cons-1D-RK2U2-c0.9.dat}{\loadedtable}
\pgfplotsset{
    width=0.45\textwidth,
    height=0.45\textwidth
}
\begin{axis}[
    xtick distance = 0.5,
    ytick distance = 1,
    xlabel = $\log \Delta x$,
    ylabel = $\log L_2$,
    legend pos = south east,
    title = {$r_x=\frac{9}{10}$}
]
\addplot[color=blue!80, mark=*, mark size=1.5] table [x index=0, y index=1] {\loadedtable};
\addplot[color=red!80, mark=triangle*, mark size=2] table [x index=0, y index=2] {\loadedtable};
\addplot[color=blue!80, mark=o, mark size=3] table [x index=0, y index=3] {\loadedtable};
\addplot[color=red!80, mark=triangle, mark size=4] table [x index=0, y index=4] {\loadedtable};
\addplot[color=blue!80, style=dashed] table [x index=0, y index=1] {\table};
\addplot[color=red!80, style=dotted, thick] table [x index=0, y index=2] {\table};
\end{axis}
\end{tikzpicture}
\caption{Comparison between the $L_2$-norm of effective coefficient terms at the originally-shared grid points (sh.) and those at the objective locations (obj.) using different fractional refinement ratios. The test problem is same as that used in Figure \ref{fig:cNorm-1D-RK2U2-c-cfl}. That figure is also plotted here for comparisons (ref.).}
\label{fig:eval-1D-RK2U2}
\end{figure}

The $L_2$-norms of $D_2 \Delta x^2$ and $D_3 \Delta x^3$ for various $r_x$ are shown in Figure \ref{fig:eval-1D-RK2U2}, where each point in the plots is obtained from a set of locally refined grids. The results from grid doubling, Figure \ref{fig:cNorm-1D-RK2U2-c-cfl} (left), is also plotted for comparisons. When comparing the results from the same refinement ratio, we find that the coefficient terms at the originally shared grid points generally agree with those at the objective locations. This confirms that the interpolation errors are insignificant. When comparing the results across refinement ratios, we find that the coefficient terms agree well with each other. This suggests that choosing a different refinement ratio does not cause a significant difference in the results. The larger differences found in the coarser grids can be attributed to the larger statistical error of the norms due to the reduced sample size, that is, $N$ in Equation \ref{eq:L2_norm}.


\FloatBarrier
\section{POEM in Combination with MIDAS} \label{sec:POEM-MIDAS}
We are prepared to incorporate MIDAS into POEM. First, we outline a general procedure for implementing POEM together with MIDAS. Then, we revisit the test case in Section \ref{subsec:2+1_dim} with the additional use of MIDAS.

\subsection{General Procedure} \label{subsec:general_proc}
Suppose we have obtained approximate solutions on a set of systematically-refined grids of uniform grid spacing. Then, we can implement POEM in combination with MIDAS by following this general procedure:
\begin{enumerate}
    \item Identify the irreducible unit based on Proposition \ref{prop:IU_u&e}.
    \item Iterate over the irreducible units in the simulation domain. Perform the following steps in each iteration.
    \item Calculate $e_{ij} = \phi_{i} - \phi_{j}$ at the grids points shared by at least two refinement levels. (See Figure \ref{fig:grids-1D}.)
    \item Apply interpolations on $e_{ij}$ at the objective locations. (See Equation \ref{eq:eij_extend}.)
    \item Construct a model for $e_{ij}$ of the form
    \begin{equation}  \label{eq:eij_path}
        e_{ij} = \sum_{m=1}^{k} D_{p_m} h^{p_m} \big[ r^{(i-j)p_m} - 1 \big] r^{(j-1)p_m}
    \end{equation}
    with preset orders $\{p_m\}$. (See Equation \ref{eq:model_path_system} and \ref{eq:eij_infty}.)
    \item Form a system of equations of $\{e_{ij}\}$ at the objective locations and solve for $\{D_{p_m} h^{p_m}\}$. (See Equation \ref{eq:sys2eq_x}.)
    \item Check whether each $D_{p_m} h^{p_m}$ converges at the rate of $p_m$. If this is true, proceed to the next step; otherwise, go back to the previous step with the wrong orders replaced by $\mu$. (See Section \ref{subsubsec:guarantee}.)
    \item Obtain $\Tilde{\phi}_e$ by subtracting the sum of $\{D_{p_m} h^{p_m}\}$ from $\phi$. (See Equation \ref{eq:model_path}.)
    \item Estimate the DE using Equation \ref{eq:est_DE}. Assess the reliability of the estimate using Equation \ref{eq:beta_tilde}.
\end{enumerate}

\subsection{Revisiting the Refinement in 2+1 Dimensions} \label{subsec:revisit}
Here we revisit the test case in Section \ref{subsec:2+1_dim} with the additional use of MIDAS. We examine if the two results are consistent and independent of refinement ratios. We also evaluate the reduced computational cost compared with the grid doubling approach.

\begin{figure}[!htb]
\centering
\begin{tikzpicture}[
    scale=0.7, font=\small,
    nd/.style={circle, inner sep=0pt, minimum size=5pt},
    color12/.style={fill=orange, color=orange},
    color23/.style={fill=blue!50, color=blue!50},
    colorInt/.style={fill=green!80!black, color=green!80!black}
]

    \begin{scope}
        \myGlobalTransformation{0}{9};
        \fill[black!10, opacity=0.3] (0,0) rectangle (9,9);
        \draw [step=2.25cm, black!50] grid (9,9);
    \end{scope}
    \begin{scope}
        \myGlobalTransformation{0}{4.5};
        \fill[black!10, opacity=0.3] (0,0) rectangle (9,9);
        \draw [step=1.5cm, black!50] grid (9,9);
    \end{scope}
    \begin{scope}
        \myGlobalTransformation{0}{0};
        \fill[black!10, opacity=0.3] (0,0) rectangle (9,9);
        \draw [step=1cm, black!50] grid (9,9);
    \end{scope}
    
    \begin{scope}
        \myGlobalTransformation{0}{4.5};
        \foreach \x in {0,4.5,9} {
            \foreach \y in {0,4.5,9} {
                \node[below] (thisNode) at (\x,\y) {};
                \pgftransformreset
                \draw[color12, thick] (thisNode) -- ++(0,3.3);
            }
        }
    \end{scope}
    \begin{scope}
        \myGlobalTransformation{0}{0};
        \foreach \x in {0,3,6,9} {
            \foreach \y in {0,3,6,9} {
                \node[below] (thisNode) at (\x,\y) {};
                \pgftransformreset
                \draw[color23, thick] (thisNode) -- ++(0,3.3);
            }
        }
    \end{scope}
    
    \begin{scope}
        \myGlobalTransformation{0}{4.5};
        \foreach \x in {0,3,6,9} {
            \foreach \y in {0,3,6,9} {
                \ifthenelse {\(0=\x \AND 0=\y\) \OR 
                            \(9=\x \AND 0=\y\) \OR 
                            \(0=\x \AND 9=\y\) \OR 
                            \(9=\x \AND 9=\y\)}{}{
                    \node[below] (thisNode) at (\x,\y) {};
                    \pgftransformreset
                    \draw[colorInt, very thick, dotted] (thisNode) -- ++(0,3.3);
                }
            }
        }
    \end{scope}
    
    \begin{scope}
        \myGlobalTransformation{0}{9};
        \foreach \x in {0,4.5,9} {
            \foreach \y in {0,4.5,9} {
                \node[nd, color12] at (\x,\y) {};
            }
        }
    \end{scope}
    \begin{scope}
        \myGlobalTransformation{0}{4.5};
        \foreach \x in {0,4.5,9} {
            \foreach \y in {0,4.5,9} {
                \ifthenelse {\(0=\x \AND 0=\y\) \OR 
                            \(9=\x \AND 0=\y\) \OR 
                            \(0=\x \AND 9=\y\) \OR 
                            \(9=\x \AND 9=\y\)}{
                    \node[nd, circle split, circle split part fill={color12,color23}, draw=black!0] at (\x,\y) {};
                }{
                    \node[nd, color12] at (\x,\y) {};
                }
            }
        }
    \end{scope}
    \begin{scope}
        \myGlobalTransformation{0}{4.5};
        \foreach \x in {0,3,6,9} {
            \foreach \y in {0,3,6,9} {
                \ifthenelse {\(0=\x \AND 0=\y\) \OR 
                            \(9=\x \AND 0=\y\) \OR 
                            \(0=\x \AND 9=\y\) \OR 
                            \(9=\x \AND 9=\y\)}{}{
                    \node[nd, color23] at (\x,\y) {};
                }
            }
        }
    \end{scope}
    \begin{scope}
        \myGlobalTransformation{0}{0};
        \foreach \x in {0,3,6,9} {
            \foreach \y in {0,3,6,9} {
                \node[nd, color23] at (\x,\y) {};
            }
        }
    \end{scope}
    
    \begin{scope}
        \myGlobalTransformation{0}{9};
        \foreach \x in {0,3,6,9} {
            \foreach \y in {0,3,6,9} {
                \ifthenelse {\(0=\x \AND 0=\y\) \OR 
                            \(9=\x \AND 0=\y\) \OR 
                            \(0=\x \AND 9=\y\) \OR 
                            \(9=\x \AND 9=\y\)}{}{
                    \node[nd, colorInt, style=cross out, draw, thick] at (\x,\y) {};
                }
            }
        }
    \end{scope}
    
    \begin{scope}
        \myGlobalTransformation{13}{0.5};
        \draw[->, very thick] (0,0) -- (1.5,0) node[anchor=west] {$x$};
        \draw[->, very thick] (0,0) -- (0,1.5) node[anchor=south west] {$y$};
    \end{scope}
    
    \begin{scope}
        \myGlobalTransformation{0}{9};
        \node[left] at (0,0) {(0,0)};
        \node[above left] at (0,9) {(0,4)};
        \node[right] at (9,9) {(4,4)};
    \end{scope}
    \begin{scope}
        \myGlobalTransformation{0}{4.5};
        \node[left] at (0,0) {(0,0)};
        \node[right] at (9,9) {(6,6)};
    \end{scope}
    \begin{scope}
        \myGlobalTransformation{0}{0};
        \node[left] at (0,0) {(0,0)};
        \node[below right] at (9,0) {(9,0)};
        \node[right] at (9,9) {(9,9)};
    \end{scope}
    
    \node at (-1, 9.0cm + 20pt) {level 1};
    \node at (-1, 4.5cm + 20pt) {level 2};
    \node at (-1, 0.0cm + 20pt) {level 3};
    
    \matrix[draw, semithick, column sep=10pt, below] at (current bounding box.south) {
        \node[color12, label=right:shared by level 1 and 2] {}; &
        \node[color23, label=right:shared by level 2 and 3] {}; &
        \node[colorInt, label=right:interpolation] {}; \\
    };
    
\end{tikzpicture}
\caption{The irreducible unit of $G$ that consists of three grids with $r_x=r_y=\frac{2}{3}$. Coordinates $(i,j)$ refer to the indices of the grid points on the respective level. While the orange and blue circles connected with solid lines represent existing shared grid points, the green crosses attached to a dotted line represent the objective locations of MIDAS being implemented here.}
\label{fig:grids-2D}
\end{figure}
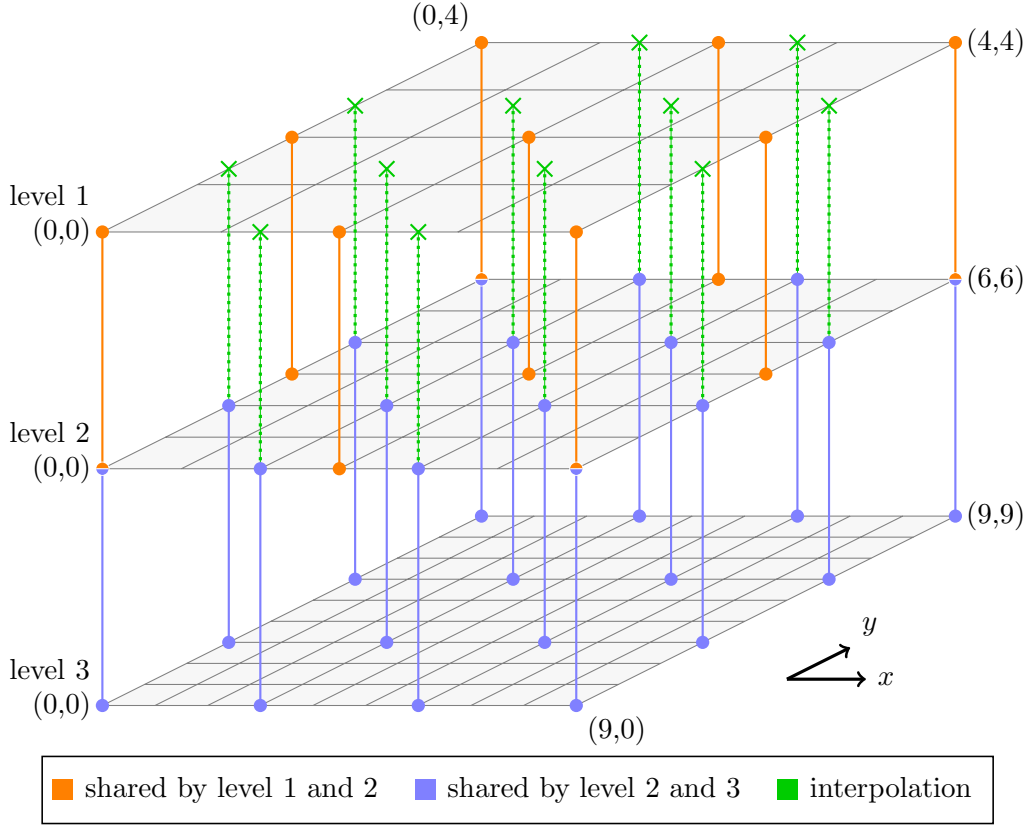

We solve the test problem using the refinement ratios $\{\frac{2}{3}, \frac{3}{4}, \frac{4}{5}\}$. As an illustration of our applications, we consider a set of three systematically refined grids characterized by $r = r_x = r_y = \frac{2}{3}$. The irreducible unit contains grid segments $4 \cross 4$, $6 \cross 6$, and $9 \cross 9$ in the coarse, medium, and fine grid, respectively, as shown in Figure \ref{fig:grids-2D}.

\begin{figure}[!htb]
\centering
\begin{tikzpicture}
\pgfplotsset{
    width=0.6\textwidth,
}
\begin{axis}[
    xtick distance = 0.1,
    ytick distance = 0.5,
    xlabel = $\log \Delta x$,
    ylabel = $\log ||D_{p} \Delta x^{p}||_2$,
    legend pos = outer north east,
    legend entries = {
        $p=2, r=\frac{1}{2}$\\$p=2, r=\frac{2}{3}$\\$p=2, r=\frac{3}{4}$\\$p=2, r=\frac{4}{5}$\\
        $p=3, r=\frac{1}{2}$\\$p=3, r=\frac{2}{3}$\\$p=3, r=\frac{3}{4}$\\$p=3, r=\frac{4}{5}$\\
    }
]
\addplot[color=blue!80, mark=*, mark size=1.5, line width=0.4pt] table [skip first n=2, x index=0, y index=1] {figures/revisit/cNorm-2D-RK2U2-c0.5.dat};
\addplot[color=blue!80, mark=+, mark size=4, line width=0.4pt] table [skip first n=2, x index=0, y index=1] {figures/revisit/cNorm-2D-RK2U2-c0.67.dat};
\addplot[color=blue!80, mark=o, mark size=4, line width=0.4pt] table [skip first n=2, x index=0, y index=1] {figures/revisit/cNorm-2D-RK2U2-c0.75.dat};
\addplot[color=blue!80, mark=diamond, mark size=4, line width=0.4pt] table [skip first n=2, x index=0, y index=1] {figures/revisit/cNorm-2D-RK2U2-c0.8.dat};

\addplot[color=red!80, mark=triangle*, mark size=2, line width=0.4pt] table [skip first n=2, x index=0, y index=2] {figures/revisit/cNorm-2D-RK2U2-c0.5.dat};
\addplot[color=red!80, mark=x, mark size=4, line width=0.4pt] table [skip first n=2, x index=0, y index=2] {figures/revisit/cNorm-2D-RK2U2-c0.67.dat};
\addplot[color=red!80, mark=triangle, mark size=4, line width=0.4pt] table [skip first n=2, x index=0, y index=2] {figures/revisit/cNorm-2D-RK2U2-c0.75.dat};
\addplot[color=red!80, mark=square, mark size=4, line width=0.4pt] table [skip first n=2, x index=0, y index=2] {figures/revisit/cNorm-2D-RK2U2-c0.8.dat};
\end{axis}
\end{tikzpicture}
\caption{The results of a real application of POEM and MIDAS in solution verification using different refinement ratios. The problem setting is that described in Figure \ref{fig:cNorm-2D-RK2U2-c}; however, here the refinements start with a much finer grid.}
\label{fig:compare_ratios}
\end{figure}
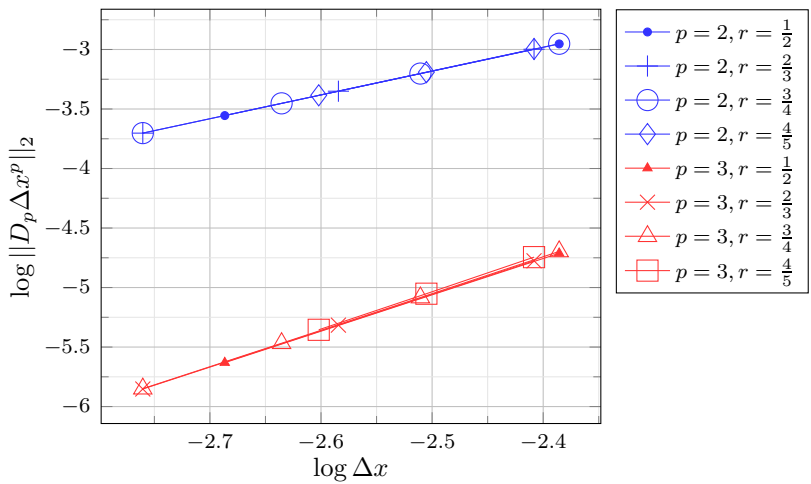

We choose the objective locations of MIDAS to be where the medium and fine grids have shared grid points. To this end, we need to extend the definition of $e_{ij}$ to the places marked by a green cross in Figure \ref{fig:grids-2D}. For the objective locations on an edge, we apply the two-point interpolation given in Equation \ref{eq:C_xo}. For those in the interior, we apply a four-point linear interpolation with the four nearest shared grid points and suitable weights $\{\Gamma_i\}$:
\begin{align}
    e_{ij}(x_o,y_o) &\equiv \Gamma_{a}e_{ij}(x_a,y_a) + \Gamma_{b}e_{ij}(x_b,y_b) + \Gamma_{c}e_{ij}(x_c,y_c) + \Gamma_{d}e_{ij}(x_d,y_d).
\end{align}
To describe $\{e_{ij}\}$ in terms of coefficient terms, we use Equation \ref{eq:eij_path} with $k=2$.

We use a single refinement ratio, in contrast to a global and a local refinement ratio in Section \ref{subsec:consistencies}, to mimic real applications of fractional refinement. The number of grid segments in the coarsest grid is 243 for $r \in \{\frac{1}{2}, \frac{3}{4}\}$ and 256 for $r \in \{\frac{2}{3}, \frac{4}{5}\}$ in both $x$ and $y$ dimensions.

The resulting $L_2$-norms of the effective coefficient terms are plotted in Figure \ref{fig:compare_ratios}. The results from grid doubling are also plotted in this figure for comparisons. We find excellent agreement among these results. This implies that the additional use of MIDAS with different refinement ratios does not affect the results of POEM.

Last but not least, we compare the computational cost of applying fractional refinement and MIDAS with that of applying solely grid doubling. We conducted the experiment serially on an Intel Core i5-7200U (2.5GHz, 3MB cache) with 4GB of memory using double precision. We repeat the simulation of $r=0.5$ and $r=0.75$ 20 times each to collect run-time statistics. The average run-time is $(1474 \pm 9)$ s for $r=0.5$ and $(324 \pm 1)$ s for $r=0.75$ in the form of a 95\% confidence interval. Hence, we have achieved a speed-up of 4.55 times in this problem.


\FloatBarrier
\section{Conclusion}
POEM exploits the fact that the orders of coefficient terms depend solely on the numerical scheme being used, whereas the coefficients of these terms depend additionally on the numerical problem being solved. Therefore, this method calculates the coefficients instead of the orders of the coefficient terms. With the procedure described in this paper, the user is guaranteed to obtain the correct orders by repeated use of this method. Consequently, the estimated exact solution achieves a high order of accuracy and produces an accurate estimate of the discretization error. In addition, this method allows for a direct comparison of the coefficient terms to assess the asymptotic convergence of the approximate solutions, which is fundamental to the reliability of the estimated discretization error.

POEM requires a lower computational cost when the refinement ratio is higher. However, the estimated error suffers from higher uncertainty due to the reduced number of shared grid points. We show that the proportion of shared grid points attains maximum if the refinement ratios are in a specific form of fractions. Furthermore, we introduce additional shared grid points using MIDAS, which exploits the linearity of coefficient terms to interpolate them at once. As a result, POEM becomes applicable to interior grid points not shared with all refinement levels. Therefore, we can obtain a global estimate of the discretization error of lower uncertainty at a reduced computational cost. Although we focus on the implementations of POEM and MIDAS for Cartesian grids, these methods are directly applicable to other grids which can be transformed into a Cartesian grid.


\section*{Acknowledgement}
This project would not have been possible without the guidance of Dr. Andreas Haselbacher. I also would like to recognize the invaluable assistance of Vera H. S. Wu, M.Sc., for reviewing the manuscript.



\bibliographystyle{elsarticle-num-names} 
\bibliography{ref-short,references}


\end{document}